\newcommand{\rb}[1]{\raisebox{1.5ex}[-1.5ex]{#1}}
\newcommand{\ra}{\rangle}
\newcommand{\la}{\langle}
\newcommand{\X}{{\mathcal X}}
\newcommand{\Q}{{\mathcal Q}}
\newcommand{\R}{{\mathcal R}}
\newcommand{\Z}{{\mathbb Z}}
\newcommand{\N}{{\mathbb N}}
\newcommand{\rk}{\mathrm{rk}}
\newcommand{\End}{{\mathrm{End}}}
\newcommand{\hirsch}{{\mathfrak h}}
\newcommand{\mc}{\mathcal}
\newcommand{\NQL}{{\sf NQL}}
\newcommand{\GAP}{{\sf GAP}}
\newcommand{\Grig}{{\mathfrak G}}
\newcommand{\FG}{\Gamma}
\newcommand{\BSV}{{\mathrm{BSV}}}
\newcommand{\ti}{\tilde}
\title{Approximating the Schur multiplier of certain infinitely presented groups via
nilpotent quotients}
\author{Ren\'e Hartung}
\date{April 28, 2010}
\newenvironment{proof}{\par\vskip-\lastskip\vskip\topsep
                       \noindent{\it Proof.}\vadjust{\nobreak}\quad
                       \begingroup\divide\topsep3\divide\itemsep3
                       \divide\partopsep3\divide\parskip3
                       \divide\parsep3}
                      {\ifvmode\penalty10000\hbox to\hsize{\hfil$\Box$}
                       \else\parfillskip0pt\widowpenalty10000\hfil$\Box$
                       \fi\par\vskip 1.5ex\endgroup}
\newtheorem{theorem}{Theorem}
\newtheorem{lemma}[theorem]{Lemma}
\newtheorem{example}[theorem]{Example}
\newtheorem{conjecture}{Conjecture}
\newtheorem{proposition}{Proposition}
\renewcommand{\theconjecture}{\Alph{conjecture}}
\begin{document}
\maketitle
\begin{abstract}
  We describe an algorithm for computing successive quotients of the Schur
  multiplier $M(G)$ for a group $G$ given by an invariant finite 
  $L$-presentation. As application, we investigate the Schur multipliers
  of various self-similar groups such as the Grigorchuk 
  super-group, the generalized Fabrykowski-Gupta groups, the Basilica 
  group and the Brunner-Sidki-Vieira group.\bigskip

  \noindent{\it Keywords:} Schur multiplier; recursive presentations; Grigorchuk 
  group; self-similar groups;
\end{abstract}

\section{Introduction}
The Schur multiplier $M(G)$ of a group $G$ can be defined as the
second homology group $H_2(G,\Z)$. It was introduced by Schur and is,
for instance, relevant in the theory of central group extensions. In
combinatorial group theory, the Schur multiplier found its applications
due to the Hopf formula: if $F$ is a free group and $R$ is a normal
subgroup of $F$ so that $G\cong F/R$, then the Schur multiplier of $G$
is isomorphic to the factor group $(R\cap F')/[R,F]$. For further details
on the Schur multiplier we refer to~\cite[Chapter~11]{Rob96}.

The Hopf formula yields that every finitely presentable group has
a finitely generated Schur multiplier. This is used in~\cite{Gri99}
for proving that the Grigorchuk group is not finitely presentable: its
Schur multiplier is infinitely generated $2$-elementary abelian. This
answers the questions in~\cite{Bau93} and~\cite{Se91}. There are various
examples of self-similar groups other than the Grigorchuk group for which
it is not known whether their Schur multiplier is finitely generated or
whether the groups are finitely presented.

The first aim of this paper is to introduce an algorithm for investigating
the Schur multiplier of self-similar groups with a view towards its 
finite generation. Let $G$ be a group with a presentation $G \cong F / R$.
Then $G/\gamma_cG \cong 
F/R\gamma_cF$, where $\gamma_cG$ is the $c$-th term of the lower
central series of $G$. We identify $M(G)$ with $(R \cap F')/[R,F]$ and
$M(G/\gamma_cG)$ with $(R\gamma_cF\cap F')/[R\gamma_cF,F]$ and define
\[
  \varphi_c\colon M(G)\to M(G/\gamma_cG),\: g[R,F]\mapsto g[R\gamma_cF,F].
\]

Then $\varphi_c$ is a homomorphism of abelian groups. We describe an 
effective method to determine the \emph{Dwyer quotients} $M_c(G) = M(G)/\ker\varphi_c$,
for $c \in \N$, provided that $G$ is given by an invariant finite 
$L$-presentation, see \cite{Bar03,BEH08} or Section~\ref{sec:Prelim} below. 
Every finitely presented group and many self-similar groups can be described 
by a finite invariant $L$-presentation. An implementation of our algorithm
is available in the \NQL-package~\cite{NQL} of the computer algebra system 
\GAP; see~\cite{GAP}.  

We have applied our algorithm to various examples of self-similar groups:
the Grigorchuk super-group $\ti\Grig$, see~\cite{BG02}, the Basilica group 
$\Delta$, see~\cite{GZ02a,GZ02b}, the Brunner-Sidki-Vieira group 
$\BSV$, see~\cite{BSV99}, and some generalized Fabrykowski-Gupta groups 
$\FG_d$, see~\cite{FG85,Gri00}. As a result, we observed that the sequence 
$(M_1(G), \ldots, M_c(G), M_{c+1}(G), \ldots)$ exhibits a periodicity 
in $c$ in all these cases. Based on this, we propose the following 
conjecture.

\renewcommand{\theconjecture}{\Roman{conjecture}}
\begin{conjecture} 
$\mbox{}$
\begin{itemize}\addtolength{\itemsep}{-0.75ex}
\item $M_c(\ti\Grig)$ is $2$-elementary abelian of rank 
      $2\lfloor \log_2(c) \rfloor + 2\lfloor
      \log_2 \frac{c}{3}\rfloor + 5$, for $c \geq 4$.
\item $M_c(\Delta)$ has the form $\Z^2\times{\mathcal A}_c$, where
      ${\mathcal A}_c$ is an abelian $2$-group of rank
      $\lfloor\log_2\frac{c}{3}\rfloor$ and exponent $2^{2\lfloor
      \frac{c-6}{2}\rfloor+2}$, for $c\geq 6$.
\item $M_c(\BSV)$ has the form $\Z^2\times{\mathcal B}_c$, where
      ${\mathcal B}_c$ is an abelian $2$-group
      of rank $\lfloor\log_2\frac{c}{5}\rfloor +
      \lfloor\log_2\frac{c}{9}\rfloor+3$ and exponent
      $2^{2\lfloor\frac{c-4}{2} \rfloor+1}$, for $c\geq 4$.
\item For a prime power $d$, the group $M_c(\FG_d)$ has exponent
      $d$ for $c$ large enough; its rank is an increasing function in $c$
      which exhibits a periodic pattern.
\end{itemize}
\noindent In particular, all of these groups have an infinitely generated
Schur multiplier and are therefore not finitely presentable.
\end{conjecture}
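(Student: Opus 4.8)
The plan is to trace all four statements back to a single mechanism supplied by the $L$-presentation, namely that the defining substitution shifts lower central weight by a fixed multiplicative factor, so that the rank and exponent of $M_c(G)$ grow only as new generations of iterated relators enter the relevant weight window. Write $G\cong F/R$ for the invariant finite $L$-presentation $\langle X\mid Q\mid\sigma\mid R_0\rangle$, so that $R$ is the normal closure of $Q\cup\bigcup_{n\ge0}\sigma^n(R_0)$ and, by invariance, $\sigma$ descends to an endomorphism $\bar\sigma$ of $G$. The first task is to show that $\sigma(\gamma_cF)\subseteq\gamma_{f(c)}F$ for an explicit weight function $f$; for the self-similar groups at hand I expect $f(c)=2c$ in the three $2$-group cases and $f(c)=dc$ for $\FG_d$, reflecting the fact that the wreath decomposition multiplies word length, and hence lower central weight, by the branching degree. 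Exactly this multiplicative shift is what can produce the $\lfloor\log_2 c\rfloor$ and $\lfloor\log_2(c/3)\rfloor$ terms appearing in the conjectured ranks.

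Granting the weight shift, the second step is to convert it into a recursion on the Dwyer quotients. Using the identification $M_c(G)=\im\varphi_c$ inside $M(G/\gamma_cG)\cong(R\gamma_cF\cap F')/[R\gamma_cF,F]$ from the introduction, together with the algorithm of Section~\ref{sec:Prelim}, I would compute $M_c(G)$ explicitly for the finitely many values of $c$ below each stability threshold (the constants $4$, $6$ and $4$, and the analogous bound for $\FG_d$), thereby fixing the base cases. I would then show that $\bar\sigma$ induces a comparison map relating $M_c(G)$ and $M_{f(c)}(G)$ which, in the stable range, identifies the former with a direct summand of the latter, the complement being a fixed block of classes coming from the single new generation $\sigma^{n}(R_0)$ whose weight first lands in the window $[c,f(c))$. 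Unrolling the resulting recursion $\rk M_{2c}(G)=\rk M_c(G)+\,\mathrm{const}$ yields the linear-in-$\log_2 c$ ranks; the splitting $\Z^2\times\mc A_c$ for $\Delta$ and $\Z^2\times\mc B_c$ for $\BSV$ should come from separating the two infinite-order classes, already visible in the abelianisation, from the torsion contributed by the iterated relators. The geometric growth of the exponents $2^{2\lfloor(c-6)/2\rfloor+2}$ and $2^{2\lfloor(c-4)/2\rfloor+1}$ is a genuinely different phenomenon from the logarithmic rank growth and would be obtained separately, by tracking the order of the distinguished torsion classes as a function of their weight in the associated graded of $G$.

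The main obstacle, and the point at which a real proof must do the hard work, is controlling the kernel and cokernel of the substitution-induced comparison maps $M_c(G)\to M_{f(c)}(G)$ \emph{uniformly} in $c$, rather than only on the finite range accessible to the algorithm. This is in effect a precise analysis of the relation module under iterated substitution: one must verify that the commutators $[\sigma^n(r),F]$ introduce no unexpected relations, so that no rank collapses, and produce no unexpected coincidences, so that no spurious torsion appears, across every weight window simultaneously, and that the passage from $\gamma_cF$ to $\gamma_{f(c)}F$ is compatible with $\sigma$ modulo terms that die in the relevant quotient. Making this compatibility rigorous amounts to proving that the periodicity observed numerically is genuine rather than an artefact of the finite computation, and it is tied to understanding the associated graded Lie ring of each group well enough to follow the action of $\sigma$ on it. For $\FG_d$ the difficulty is compounded by the demand for a single argument valid for all prime powers $d$, which is precisely why only the qualitative statement is conjectured in that case. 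Finally, once the ranks (or exponents) are shown to be unbounded in $c$, the closing sentence is immediate: a finitely presentable group has finitely generated Schur multiplier by the Hopf formula, so none of these groups can be finitely presentable.
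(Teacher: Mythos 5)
This statement is Conjecture~I: the paper does not prove it, and offers no proof to compare against. Its sole support is the computational evidence of Section~\ref{sec:Apps}, where the Dwyer quotients $M_c(G)$ are computed for finitely many $c$ (e.g.\ $c\leq 232$ for $\ti\Grig$, $c\leq 103$ for $\Delta$, $c\leq 53$ for $\BSV$) and the observed periodicity is extrapolated; the refined Conjectures~1--6 there are likewise conjectures. Your proposal is therefore not ``a different route to the same proof'' --- it is a research programme for a statement the paper itself leaves open, and as a proof it has a genuine gap which you yourself name: the uniform-in-$c$ control of the kernel and cokernel of the substitution-induced comparison maps $M_c(G)\to M_{f(c)}(G)$. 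That step is not a technical loose end to be tightened later; it \emph{is} the conjecture. Computing the finitely many base cases below the stability threshold (which is all the paper's algorithm can ever deliver) establishes nothing beyond those cases once the inductive step is missing, so the argument never gets off the ground as a proof.

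Two further concrete problems with the sketch. First, the proposed weight-shift $\sigma(\gamma_cF)\subseteq\gamma_{f(c)}F$ with $f(c)=2c$ is false at the level of the free group: for the Basilica substitution $a\mapsto b^2$, $b\mapsto a$ one has $\sigma(b)=a$ of weight $1$, and for the Grigorchuk $\sigma$ one has $\sigma(b)=d$; all one gets for free is $\sigma(\gamma_cF)\subseteq\gamma_cF$ since $\gamma_cF$ is fully invariant. Any genuine weight-doubling statement must be formulated in the graded Lie ring of $G$ or on the relation module $R/[R,F]$ modulo the Dwyer filtration, and establishing it there is exactly the ``precise analysis of the relation module under iterated substitution'' you defer. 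Second, even granting a recursion $\rk M_{2c}(G)=\rk M_c(G)+\mathrm{const}$, the exponent claims for $\Delta$ and $\BSV$ (growth like $2^{2\lfloor(c-6)/2\rfloor+2}$) require tracking the precise order of distinguished torsion classes through every weight window, which your sketch acknowledges is ``a genuinely different phenomenon'' and again leaves unaddressed. The only part of the statement you actually prove is the final sentence: \emph{if} the ranks (or exponents) of $M_c(G)$ are unbounded in $c$, then $M(G)$ is infinitely generated, and by the Hopf formula $G$ is not finitely presentable --- a correct, and in the paper's own framing conditional, deduction.
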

\setcounter{conjecture}{0}
\renewcommand{\theconjecture}{\Alph{conjecture}}

Further details on the periodicities and the computational evidence for
them are given in Section~\ref{sec:Apps}.

\section{Preliminaries}\label{sec:Prelim}
In the following, we recall the basic notion of invariant and finite
$L$-presentations and the basic theory of the Schur multiplier of a group.
Let $F$ be a finitely generated free group over the alphabet $\X$. Further
suppose that $\Q,\R\subset F$ are finite subsets of the free group
$F$ and $\Phi\subset\End(F)$ is a finite set of endomorphisms of $F$.
Then the quadruple $\la\X\mid\Q\mid\Phi\mid\R\ra$ is a \emph{finite
$L$-presentation}. It defines the \emph{finitely $L$-presented group}
\[
  G = \left\la \X~\middle|~\Q\cup \bigcup_{\varphi\in\Phi^*} \R^\varphi
  \right\ra,
\]
where $\Phi^*$ denotes the free monoid generated by $\Phi$; that is,
the closure of $\Phi\cup\{{\rm id}\}$ under composition. A finite
$L$-presentation $\la\X\mid\Q\mid\Phi\mid\R\ra$ is \emph{invariant} if
every endomorphism $\varphi\in\Phi$ induces an endomorphism of $G$; that
is, if the normal closure of $\Q\cup\bigcup_{\varphi\in\Phi^*}\R^\varphi$
in $F$ is $\varphi$-invariant. For example, every finite $L$-presentation
of the form $\la\X\mid\emptyset\mid\Phi\mid\R\ra$ is invariant. Clearly,
invariant finite $L$-presentations generalize finite presentations since
every finitely presented group $\la\X\mid\R\ra$ is finitely $L$-presented
by $\la\X\mid\emptyset\mid\{{\rm id}\}\mid\R\ra$. Further examples of
invariantly $L$-presented groups are several self-similar groups including
the Grigorchuk group~\cite{Gri80}, the Basilica group~\cite{GZ02a,GZ02b},
and the Brunner-Sidki-Vieira group~\cite{BSV99}.\medskip

In the remainder of this section, we recall the basic theory of the
Schur multiplier of a group $G$. Recall that, in general, the
Schur multiplier of a finitely presented group is not computable;
see~\cite{Gor95}. But, for instance, if $G$ is finite, then $M(G)$
can be deduced from a finite presentation of $G$ with the Hopf formula
and the Reidemeister-Schreier algorithm. A more effective algorithm for
finite permutation groups is described in~\cite{Ho84}. Recently, Eick
and Nickel~\cite{EN08} described an algorithm for computing the Schur
multiplier of a polycyclic group given by a polycyclic presentation.

Let $F$ be a free group and $R$ be a normal subgroup of $F$ so that
$G\cong F/R$. Then the Hopf formula gives
\begin{equation}\label{eqn:Hopf}
  M(G) \cong (R\cap F') / [R,F].
\end{equation}
Suppose that $N$ is a normal subgroup of $G$ and let $S$ be a normal
subgroup of $F$ so that $SR/R$ corresponds to $N$. Then Blackburn and
Evens~\cite{BE79} determined the exact sequence
\[
  1 \to (R\cap [S,F])/([R,F]\cap [S,F])\to M(G) \to
  M(G/N) \to(N\cap G')/[N,G]\to 1.
\]
Applying this sequence to the lower central series term $N=\gamma_cG$
yields the exact sequence
\[
  1 \to (R\cap \gamma_{c+1}F)/([R,F]\cap\gamma_{c+1} F)
  \to M(G) \stackrel{\varphi_c}{\to} M(G/\gamma_cG) \to \gamma_cG / \gamma_{c+1} G \to 1.
\]
This gives a filtration $M(G) \geq \ker\varphi_1 \geq \ker\varphi_2
\geq\ldots$, called the \emph{Dwyer-filtration}, of the Schur
multiplier of $G$. Note that, if $G$ has a maximal nilpotent quotient
of class $c$, then
\[
  \bigcap_{c\in\N_0} \ker\varphi_c \cong 
  (R\cap\gamma_{c+1}F)[R,F]/[R,F].
\]
However, even if the group $G$ is residually nilpotent, the group
$F/[R,F]$ is not necessarily residually nilpotent; see~\cite{Mik05}
and~\cite{Blu07}. Thus the \emph{Dwyer-kernel} $\bigcap_{c\in\N}
\ker\varphi_c$ is possibly non-trivial.\medskip

We note that the Schur multiplier $M(G/\gamma_cG)$ can be computed
with the algorithm in~\cite{EN08} while the isomorphism type
of $\gamma_cG/\gamma_{c+1}G$ can be computed with the nilpotent
quotient algorithm in~\cite{BEH08}. Therefore, the sequence $M(G)\to
M(G/\gamma_cG) \to \gamma_cG/\gamma_{c+1}G \to 1$ allows to determine
the size of $M_c(G)$ provided that $M(G/\gamma_cG)$ is finite. However,
the algorithm described here determines the structure of $M_c(G)$ even
if the Schur multiplier $M(G/\gamma_cG)$ is infinite.

\section{Adjusting an invariant $L$-presentation}\label{sec:ModLp}
In order to prove the following theorem, we explicitly describe
an algorithm for modifying an invariant $L$-pre\-sen\-ta\-tion. The
resulting $L$-presentation enables us to read off a generating set for the
Schur multiplier in Section~\ref{sec:CovMG}. Our algorithm generalizes
the explicit computations in~\cite{Gri99}.
\begin{theorem}\label{thm:ModLp}
  Let $\la\X\mid\Q\mid\Phi\mid\R\ra$ be an invariant finite
  $L$-presentation which defines the group $G=F/R$. Then $G$ admits
  an invariant finite $L$-presentation $\la\X\mid\Q'\cup{\mc
  B}\mid\Phi\mid\R'\ra$ with $\Q',\R'\subset F'$ and ${\mc B}\subset F$
  satisfying $|{\mc B}| = |\X|-\hirsch(G/G')$, where $\hirsch(G/G')$
  denotes the torsion-free rank of $G/G'$.
\end{theorem}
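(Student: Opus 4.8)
The plan is to correct each given relator by its image in the abelianisation, to collect these corrections into a small set $\mc B$, and to show that the substitutions in $\Phi$ can be accommodated by enlarging the iterated part of the presentation. Write $\pi\colon F\to F/F'\cong\Z^{|\X|}$ for the abelianisation map. Since the presentation is invariant, $R$ is $\varphi$-invariant for every $\varphi\in\Phi$, so the induced endomorphism $\bar\varphi$ of $\Z^{|\X|}$ maps $\pi(R)=RF'/F'$ into itself. As $\Z^{|\X|}/\pi(R)\cong G/G'$ has torsion-free rank $\hirsch(G/G')$, the subgroup $\pi(R)$ is free abelian of rank $m:=|\X|-\hirsch(G/G')$. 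First I would compute a basis $\pi(b_1),\dots,\pi(b_m)$ of $\pi(R)$ by closing the subgroup generated by $\pi(\Q)\cup\pi(\R)$ under the finitely many maps $\bar\varphi$ ($\varphi\in\Phi$); this terminates because $\Z^{|\X|}$ is Noetherian. Choosing preimages $b_i\in R$ gives $\mc B=\{b_1,\dots,b_m\}\subset R$ with $|\mc B|=m$, as required.

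Next I would split off the commutator part of every original relator. For $q\in\Q$ write $\pi(q)=\sum_i e_i\,\pi(b_i)$ and set $q'=\big(\prod_i b_i^{e_i}\big)^{-1}q$, so that $q'\in R\cap F'$; put $\Q'=\{q':q\in\Q\}$. The same procedure applied to each $r\in\R$ yields $r'\in R\cap F'$. To control the endomorphisms, for every $b_i$ and every $\varphi\in\Phi$ I expand $\bar\varphi(\pi(b_i))=\sum_j c_{ij}^{(\varphi)}\,\pi(b_j)$ and set $t_i^{(\varphi)}=\big(\prod_j b_j^{c_{ij}^{(\varphi)}}\big)^{-1}b_i^\varphi\in R\cap F'$. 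I then take $\R'=\{r':r\in\R\}\cup\{t_i^{(\varphi)}:1\le i\le m,\ \varphi\in\Phi\}$, a finite subset of $F'$. By construction $\Q',\R'\subset F'$ and $\mc B\subset F$, so the only thing to check is that the candidate presentation $\la\X\mid\Q'\cup\mc B\mid\Phi\mid\R'\ra$ defines $G$ and is invariant.

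It therefore remains to prove that the normal closure $K$ of $\Q'\cup\mc B\cup\bigcup_{\varphi\in\Phi^*}(\R')^\varphi$ equals $R$; invariance of the new presentation then follows at once from that of the old, since $R$ is $\Phi$-invariant. As every stated generator of $K$ lies in $R$, we have $K\subseteq R$. For the reverse inclusion it suffices that the original normal generators lie in $K$, and $q=\big(\prod_i b_i^{e_i}\big)q'\in K$ is immediate. The crucial point, which I expect to be the main obstacle, is to control the images $b_i^\varphi$ for arbitrary $\varphi\in\Phi^*$: I would show $b_i^\varphi\in K$ by induction on the length of $\varphi$ as a word in $\Phi$, the base case $\varphi=\mathrm{id}$ being $b_i\in\mc B$. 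Writing $\varphi=\alpha\beta$ with $\alpha\in\Phi$ and $\beta\in\Phi^*$ shorter, the defining identity $b_i^\alpha=\big(\prod_j b_j^{c_{ij}^{(\alpha)}}\big)\,t_i^{(\alpha)}$ gives
\[
  b_i^\varphi=(b_i^\alpha)^\beta=\Big(\prod_j (b_j^\beta)^{c_{ij}^{(\alpha)}}\Big)\,(t_i^{(\alpha)})^\beta,
\]
where the product lies in $K$ by the inductive hypothesis and $(t_i^{(\alpha)})^\beta$ lies in $K$ because $t_i^{(\alpha)}\in\R'$. Granting this, every $r^\varphi=(b_r)^\varphi(r')^\varphi$ lies in $K$, whence $R\subseteq K$ and $K=R$, which finishes the argument.
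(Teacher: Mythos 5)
Your proof is correct. The overall strategy --- abelianise, choose a basis of $\pi(R)=RF'/F'$, and correct each relator by a word in $\mc B$ with the same exponent vector --- is the same as the paper's, and your count $|\mc B|=|\X|-\hirsch(G/G')$ agrees. Where you genuinely diverge is in the treatment of the iterated relators. The paper runs a spinning/Hermite-normal-form loop that processes the elements of $\R$ and their $\Phi$-images one at a time, enlarging the basis on the fly, and at the crucial moment asserts that the images $(w_r^{-1})^\varphi$ of the correction words ``are consequences'' by invariance of the original presentation. You instead fix a basis of all of $\pi(R)$ at the outset (legitimately identified with the $\bar\varphi$-closure of $\la\pi(\Q)\cup\pi(\R)\ra$, using invariance, and obtained in finitely many steps by Noetherianity), and --- this is your new ingredient --- you add to the iterated part the transition relators $t_i^{(\varphi)}$ recording how each $\varphi\in\Phi$ acts on $\mc B$ modulo $F'$. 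This lets you prove $K=R$ by a clean induction on the length of $\varphi\in\Phi^*$, making fully explicit the one point the paper leaves to the reader: why the $\Phi^*$-images of the elements of $\mc B$ (and hence of the correction words built from them) remain consequences of the \emph{modified} presentation rather than merely elements of $R$. The price is a slightly larger $\R'$ (by at most $|\mc B|\cdot|\Phi|$ elements), which the statement of the theorem permits; the gain is a self-contained verification that the new presentation defines $G$ and is invariant, at the cost of being less directly algorithmic than the paper's incremental construction.
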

\begin{proof}
  Since $\la\X\mid\Q\mid\Phi\mid\R\ra$ is an invariant $L$-presentation,
  every endomorphism $\varphi\in\Phi$ induces an endomorphism of
  the group $G$. Thus we have $R^\varphi\subseteq R$, for every
  $\varphi\in\Phi^*$. In particular, every image of a relator in
  $\Q\cup\R$ is a consequence; that is, $\Q^\varphi\subset R$ and
  $\R^\varphi\subset R$, for every $\varphi\in\Phi^*$.

  Write $n=\rk(F)$. Then the abelianization $\pi\colon F\to\Z^n$
  maps every $x\in F$ to its corresponding exponent vector
  $a_x\in\Z^n$. Clearly, $\ker\pi=F'$ and, since $F'$ is fully-invariant,
  every $\varphi\in\Phi$ induces an endomorphism of the free abelian
  group $\Z^n$. Therefore, the exponent vector of $x^\varphi$ is the
  image $a_xM_\varphi$ for some matrix $M_\varphi\in\Z^{n\times n}$. Now,
  the normal subgroup $RF'$ maps onto
  \begin{equation}
    U = \la a_q, a_r M_\varphi\mid q\in\Q, r\in\R,\varphi\in\Phi^*\ra\leq\Z^n.
    \label{eqn:UZn}
  \end{equation}
  As every subgroup of $\Z^n$ is generated by at most $n$ elements,
  the subgroup $U$ is finitely generated. In the following, we may
  use the spinning algorithm from~\cite{BEH08} and Hermite normal form
  computations to compute a basis for the subgroup $U$ while modifying
  the $L$-presentation simultaneously.

  Let $B$ be a basis of $\la a_q\mid q\in\Q\ra$.  Then every element
  $u\in B$ is a $\Z$-linear combination of elements in $\{a_q\mid
  q\in\Q\}$. Hence, for every $u\in B$, there exists a word $r_u$ in the
  relators in $\Q$ such that $a_{r_u} = u$. Define ${\mc B}=\{r_u\mid
  u\in B\}$. Then, for every $q\in\Q$, it holds that $a_q\in\la B\ra$ as
  $B$ is a basis and hence, there exists a word $w_q$ in the $r_u$'s so
  that $a_{w_q}=a_q$. Define $\Q' = \{q w_q^{-1}\mid q\in\Q\}$. Then the
  exponent vector of each element in $\Q'$ vanishes and hence $\Q'\subset
  F'$. Moreover, the invariant and finite $L$-presentation
  \[
    \la\X\mid\Q' \cup {\mc B} \mid \Phi\mid \R\ra
  \]
  still defines the group $G$ as we only applied Tietze transformations
  to the given $L$-presentation.

  It remains to force the elements of $\R$ into the derived subgroup
  $F'$. For this purpose, we will use the spinning algorithm
  from~\cite{BEH08} as follows: Initialize $\R'=\emptyset$.  As long
  as $\R$ is non-empty, we take an element $r\in\R$ and remove it from
  $\R$. Then either $a_r\in\la B\ra$ or $a_r\not\in\la B\ra$ holds. If
  $a_r\in\la B\ra$, then there exists a word $w_r$ in the $r_u$'s such
  that $a_{w_r}=a_r$ and hence, $rw_r^{-1}\in F'$. In this case we just
  add $rw_r^{-1}$ to $\R'$. Note that, for every $\varphi\in\Phi^*$, the
  word $(w_r^{-1})^\varphi$ is a consequence and hence, we can replace the
  relator $r^\varphi$ in the $L$-presentation by $(rw_r^{-1})^\varphi$.
  The invariant and finite $L$-presentation
  \[ 
    \la\X\mid\Q'\cup{\mc B} \mid \Phi\mid \R' \cup \R\ra
  \]
  still defines the group $G$.

  If, on the other hand, $a_r\not\in\la B\ra$ holds, we enlarge the
  current basis $B$ and modify the set ${\mc B}$. Let $B'$ be a basis
  for $\la B\cup\{a_r\}\ra$. Then every $v\in B'$ is a $\Z$-linear
  combination of the elements in $B\cup\{a_r\}$ and hence, there
  exists a word $\ti r_v$ in ${\mc B}\cup\{r\}$ such that $a_{\ti
  r_v} = v$. Define ${\mc B} = \{\ti r_v\mid v\in B'\}$. Then, by
  construction, either $|{\mc B}|=|B|+1$ or $|{\mc B}|=|B|$ holds. In
  the latter case, there is an element $u\in B$ so that $u\in\la
  (B\setminus\{u\})\cup\{a_r\}\ra$ holds. Thus, there exists a word
  $w_u$ in the elements of ${\mc B}$ such that $a_{w_u}=u$ and hence,
  $r_u w_u^{-1}\in F'$. In this case, we add $r_uw_u^{-1}$ to $\Q'$
  and add the images $\{r_u^\varphi\mid\varphi\in\Phi\}$ to $\R$. This
  yields an invariant and finite $L$-presentation $\la\X\mid\Q'\cup{\mc
  B}\mid\Phi\mid\R'\cup\R\ra$, with $\Q',\R'\subset F'$, which still
  defines the group $G$.

  As ascending chains of subgroups in $\Z^n$ terminate, eventually every
  exponent vector of an element in $\R$ is contained in the subgroup
  $\la B\ra$ and hence, the algorithm described above eventually
  terminates.  Clearly, the basis $B$ is then a basis for the subgroup
  $U$ in~(\ref{eqn:UZn}). As shown in~\cite{BEH08}, the abelian quotient
  $G/G'$ is isomorphic to the factor $\Z^n/U$. Its torsion-free rank is
  $n-|B|$ as claimed above.
\end{proof}
In the following example, we recall the explicit computations
in~\cite{Gri99} for the Grigorchuk group $\Grig$.
\begin{example}\label{ex:ModGri}
  Consider the Grigorchuk group $\Grig$ with its invariant $L$-presentation
  \[
    \Grig \cong \la\{a,b,c,d\}\mid\{a^2,b^2,c^2,d^2,bcd\}\mid\{\sigma\}
                              \mid\{(ad)^4, (adacac)^4\}\ra
  \]
  where $\sigma$ is the free group endomorphism induced by the mapping
  \[
    \sigma\colon ~\left\{
    \begin{array}{rcl}
      a &\mapsto& c^a\\
      b &\mapsto& d\\
      c &\mapsto& b\\
      d &\mapsto& c.
    \end{array}\right.
  \]
  As the exponent vectors $(2,0,0,0)$, $(0,1,1,1)$, $(0,0,2,0)$,
  and $(0,0,0,2)$ of the relations $a^2$, $bcd$, $c^2$, and $d^2$,
  respectively, are $\Z$-linearly independent forming a basis for the
  subgroup $U$ in (\ref{eqn:UZn}), we can modify this presentation so
  that the relations become
  \begin{equation}\label{eqn:ModGri}
    a^2,c^2,d^2,bcd,b^2(bcd)^{-2}c^2d^2,
     \sigma^k( (ad)^4 a^{-4} d^{-4} ), 
     \sigma^k( (adacac)^4 a^{-12}c^{-8}d^{-4}),
  \end{equation}
  for every $k\in\N_0$. Since the $L$-presentation is invariant,
  the images $\sigma^k(a^{-4}d^{-4})$ and $\sigma^k( a^{-12}c^{-8}d^{-4}
  )$ are consequences. Hence, the invariant finite $L$-pre\-sen\-ta\-tion
  \[
    \la\{a,b,c,d\}\mid\{b^2(bcd)^{-2}c^2d^2\}\cup\{a^2,c^2,d^2,bcd\}\mid
                      \{\sigma\}\mid\R'\ra,
  \]
  where $\R'=\{(ad)^4 a^{-4} d^{-4},(adacac)^4 a^{-12}c^{-8}d^{-4}\}$,
  defines the Grigorchuk group $\Grig$ and, as $\Grig/\Grig'\cong \Z_2^3$,
  it has the form as claimed in Theorem~\ref{thm:ModLp}.
\end{example}

\section{A generating set for the Schur multiplier}\label{sec:CovMG}
Let $G$ be a finitely generated group. We will use the results of
Theorem~\ref{thm:ModLp} and the Hopf formula to give a generating
set for the Schur multiplier of $G$ if $G$ is invariantly finitely
$L$-presented. Suppose that $F$ is a finitely generated free group and
$R$ is a normal subgroup of $F$ so that $G\cong F/R$. Then $F/[R,F]$ is a
central extension of $R/[R,F]$ by the group $G$ and the subgroup $R/[R,F]$
contains $(R\cap F')/[R,F]$. By the Hopf formula, the latter subgroup is
isomorphic to the Schur multiplier of $G$. Further, the subgroup $R/[R,F]$
decomposes as follows.
\begin{proposition}\label{prop:Mult}
  Let $G\cong F/R$ with a finitely generated free group $F$. Then we 
  have that
  \[
    R/[R,F] \cong \Z^{\rk(F) - \hirsch(G/G')} \oplus M(G).
  \]
\end{proposition}
\begin{proof}
  The factor $RF'/F'$ is free abelian with torsion-free rank
  $\rk(F)-\hirsch(G/G')$.  Since $RF'/F'\cong R/(R\cap F')$ is free
  abelian, the subgroup $(R\cap F')/[R,F]$ has a free abelian complement
  of rank $\rk(F)-\hirsch(G/G')$ and thus, the central subgroup $R/[R,F]$
  decomposes as claimed above.
\end{proof}
As $R/[R,F]$ is central in $F/[R,F]$, it is generated by the images of
the normal generators of $R$. Thus, in particular, if $R$ is finitely
generated as normal subgroup (that is, if $G$ is finitely presentable),
then $R/[R,F]$ is a finitely generated abelian group and so is its
subgroup $(R\cap F')/[R,F]$.

If $G$ is finite, then $R/[R,F]$ is an abelian subgroup with finite index
in $F/[R,F]$. A finite presentation for $F/[R,F]$ can be obtained from
a finite presentation of $G$. Then the Reidemeister-Schreier algorithm
yields a finite presentation for $R/[R,F]$ from which the isomorphism
type of $M(G)$ is obtained easily.

If $G$ is polycyclic, then it is finitely presentable and hence,
the group $F/[R,F]$ is an extension of a finitely generated abelian
group by a polycyclic group. In particular, $F/[R,F]$ is polycyclic
in this case. A consistent polycyclic presentation for $F/[R,F]$
can be computed with the algorithm in~\cite{EN08}. This polycyclic
presentation enables us to read off the isomorphism type of $R/[R,F]$ and,
by Proposition~\ref{prop:Mult}, the isomorphism type of $M(G)$. If $G$
is finitely generated and nilpotent of class $c$, then $F/[R,F]$ is
nilpotent of class at most $c+1$.  If $G$ is given by a weighted nilpotent
presentation, then the algorithm in~\cite{Nic96} computes a weighted
nilpotent presentation for $F/[R,F]$.

We now consider the case of an invariantly finitely $L$-presented
group $G$. Even though its Schur multiplier is not computable in
general, the following theorem yields a generating set for
$M(G)$ as subgroup of $R/[R,F]$.
\begin{theorem}\label{thm:GensMG}
  Let $\la\X\mid\Q'\cup{\mc B}\mid\Phi\mid\R'\ra$ be an invariant
  finite $L$-presentation of $G$ as provided by
  Theorem~\ref{thm:ModLp}. Further let $\pi\colon F\to F/[R,F],
  x\mapsto \bar x$ denote the natural homomorphism. Then we have that
  \[
    M(G)\cong \la \bar q, \overline{r^\varphi} \mid q\in\Q', r\in\R',
                                                    \varphi\in\Phi^*\ra.
  \]
\end{theorem}
\begin{proof}
  Clearly, $R/[R,F]$ is generated by the images of $\Q'\cup{\mc
  B}\cup\bigcup_{\varphi\in\Phi^*} (\R')^\varphi$. As the relators in
  $\Q'\cup\R'$ are contained in $F'$, it holds that
  \begin{equation}\label{eqn:GensMG}
    \{ \bar q,\overline{r^\varphi} \mid q\in\Q',r\in\R',\varphi\in\Phi^*\}
    \subseteq (R\cap F')/[R,F].
  \end{equation}
  We are left with the relators in ${\mc B}$. Recall
  that we have $|{\mc B}|=\rk(F)-\hirsch(G/G')$. Hence, the images
  $\{\bar r\mid r\in{\mc B}\}$ generate a free abelian complement
  to the Schur multiplier $(R\cap F')/[R,F]$ in $R/[R,F]$. Therefore,
  the images in~(\ref{eqn:GensMG}) necessarily generate 
  $(R\cap F')/[R,F]$.
\end{proof}
As the group $G$ in Theorem~\ref{thm:GensMG} is invariantly $L$-presented,
for every endomorphism $\varphi\in\Phi$, we have $R^\varphi\subseteq R$
and $[R,F]^\varphi \subseteq [R,F]$. Therefore, every $\varphi\in\Phi$
also induces an endomorphism of $F/[R,F]$ which fixes the subgroup
$R/[R,F]$. Further, as $F'$ is fully-invariant, every such $\varphi$
induces an endomorphism $\bar\varphi$ of $(R\cap F')/[R,F]$. This
yields that
\[
  M(G)\cong \la \bar q, {\bar r\,}^{\bar\varphi} \mid q\in\Q', r\in\R',
                                                      \varphi\in\Phi^*\ra
\]
and hence, the free monoid $\Phi^*$ induces a $\Phi^*$-module structure on the
Schur multiplier $M(G)$ in a natural way:
\begin{lemma}\label{lem:ModStruc}
  Let $\la\X\mid\Q\mid\Phi\mid\R\ra$ be an invariant finite
  $L$-presentation. Then the Schur multiplier $M(G)$ is finitely
  generated as a $\Phi^*$-module.
\end{lemma}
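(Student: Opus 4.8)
The plan is to reduce immediately to the adjusted $L$-presentation supplied by Theorem~\ref{thm:ModLp} and then read off finitely many module generators from Theorem~\ref{thm:GensMG}. First I would invoke Theorem~\ref{thm:ModLp} to replace the given invariant finite $L$-presentation $\la\X\mid\Q\mid\Phi\mid\R\ra$ by one of the form $\la\X\mid\Q'\cup\mc B\mid\Phi\mid\R'\ra$ with $\Q',\R'\subset F'$ defining the same group $G=F/R$. Since this $L$-presentation is invariant, every $\varphi\in\Phi$ induces an endomorphism of $F/[R,F]$ fixing $R/[R,F]$, and because $F'$ is fully invariant it restricts to an endomorphism $\bar\varphi$ of $(R\cap F')/[R,F]\cong M(G)$; these induced maps equip $M(G)$ with the structure of a module over the free monoid $\Phi^*$, exactly as recorded in the paragraph preceding the lemma.

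Next I would exhibit the finite generating set. Put $S=\{\bar q\mid q\in\Q'\}\cup\{\bar r\mid r\in\R'\}$, which is finite because $\Q'$ and $\R'$ are finite subsets of $F$. By Theorem~\ref{thm:GensMG}, together with the compatibility $\overline{r^\varphi}=\bar r^{\,\bar\varphi}$, the Schur multiplier is generated as an abelian group by the set $\{\bar q,\bar r^{\,\bar\varphi}\mid q\in\Q',\,r\in\R',\,\varphi\in\Phi^*\}$. Every element of this set lies in the $\Phi^*$-submodule generated by $S$: each $\bar q$ is the image of $\bar q\in S$ under the identity of $\Phi^*$, and each $\bar r^{\,\bar\varphi}$ is the image of $\bar r\in S$ under $\varphi$. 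Hence the $\Phi^*$-submodule generated by $S$ contains an abelian generating set of $M(G)$ and therefore equals $M(G)$, which shows that $M(G)$ is finitely generated as a $\Phi^*$-module.

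The essential work has already been carried out in Theorems~\ref{thm:ModLp} and~\ref{thm:GensMG}, so the only point requiring genuine care is the verification that $\varphi\mapsto\bar\varphi$ really defines a $\Phi^*$-action, that is, that it is compatible with composition in $\Phi^*$ and satisfies $\overline{r^\varphi}=\bar r^{\,\bar\varphi}$. This compatibility is functorial: for a word $\psi\in\Phi^*$ the induced map $\bar\psi$ is the corresponding composite of the maps $\bar\varphi$, $\varphi\in\Phi$, since passage to the quotient $(R\cap F')/[R,F]$ is natural in the endomorphisms of $F$ preserving both $R$ and $[R,F]$. Granting this mild point, the finiteness of $S$ completes the argument.
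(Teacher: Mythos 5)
Your proposal is correct and follows essentially the same route as the paper: the paper also derives the lemma directly from the adjusted presentation of Theorem~\ref{thm:ModLp} and the generating set of Theorem~\ref{thm:GensMG}, observing that invariance makes each $\varphi\in\Phi$ induce an endomorphism $\bar\varphi$ of $(R\cap F')/[R,F]$ and that the identity $\overline{r^\varphi}={\bar r\,}^{\bar\varphi}$ turns the abelian generating set into a finite $\Phi^*$-module generating set. Your extra remark on the functoriality of $\varphi\mapsto\bar\varphi$ is a sound, if brief, justification of a point the paper leaves implicit.
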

In particular, the Schur multiplier $M(G)$ has the form $A\oplus
\bigoplus_{\Phi^*} B$ with finitely generated abelian groups $A$ and $B$;
see~\cite{Bar03}.\medskip

We proceed with Example~\ref{ex:ModGri} by describing a generating
set for the Schur multiplier of the Grigorchuk group as provided by
Theorem~\ref{thm:GensMG}; cf.~\cite{Gri99}.
\begin{example}
  Consider the invariant finite $L$-presentation of the Grigorchuk group
  $\Grig$ as determined in Example~\ref{ex:ModGri}. Then the images of
  \begin{equation}\label{eqn:SchuGri}
    b^2 (bcd)^{-2} c^2d^2, 
    \sigma^k( (ad)^4 a^{-4} d^{-4} ), 
    \sigma^k( (adacac)^4 a^{-12}c^{-8}d^{-4}),~\textrm{ with }k\in\N_0,
  \end{equation}
  in $F/[R,F]$, generate the subgroup $(R\cap F')/[R,F]$. The images
  in $F/[R,F]$ of the relations $a^2$, $c^2$, $d^2$, and $bcd$ generate
  a free abelian complement to the Schur multiplier $(R\cap F')/[R,F]$
  in $R/[R,F]$.
\end{example}

\section{Approximating the Schur multiplier}\label{sec:FGQuot}
We finally describe our algorithm for approximating the Schur
multiplier of an invariantly finitely $L$-presented group $G$. Let
$\la\X\mid\Q\mid\Phi\mid\R\ra$ be an invariant finite $L$-presentation
defining the group $F/R$ so that $G\cong F/R$.  Then $G$ is finitely
generated and hence, its lower central series quotient $G/\gamma_cG$
is polycyclic. The nilpotent quotient algorithm
in~\cite{BEH08} computes a weighted nilpotent presentation for
$G/\gamma_cG$ together with the natural homomorphism $\pi\colon F\to
G/\gamma_cG$. In~\cite{Nic96}, Nickel described a covering-algorithm
which, given a weighted nilpotent presentation for $G/\gamma_cG$
and the homomorphism $\pi$, computes a polycyclic presentation for
$F/[R\gamma_cF,F]$ together with the natural homomorphism $\bar\pi\colon
F\to F/[R\gamma_cF,F]$.  The homomorphism $\bar\pi$ induces the
homomorphism $\varphi_c\colon M(G)\to M(G/\gamma_cG)$ as follows:
By Theorem~\ref{thm:ModLp}, the group $G$ has an invariant finite
$L$-presentation of the form
\[
 \la\X\mid\Q'\cup{\mc B}\mid\Phi\mid\R'\ra,\quad\textrm{with }\Q',\R'\subset F'
\]
and $|{\mc B}| = |\X|-\hirsch(G/G')$. Now, by Theorem~\ref{thm:GensMG},
the images of $\Q'\cup\bigcup_{\varphi\in\Phi^*}(\R')^\varphi$ in
$F/[R,F]$ generate the subgroup $(R\cap F')/[R,F]$. Similarly,
the their images in $F/[R\gamma_cF,F]$ generate the
subgroup $(R\cap F')[R\gamma_cF,F]/[R\gamma_cF,F]$. Since
$[R\gamma_cF,F]=[R,F]\gamma_{c+1}F$, we have that
\[
  (R\cap F')[R\gamma_cF,F]/[R\gamma_cF,F] 
  = (R\gamma_{c+1}F \cap F') /[R\gamma_cF,F].
\]
The latter subgroup is contained in $(R\gamma_cF\cap F')/[R\gamma_cF,F]$
which is isomorphic to the Schur multiplier $M(G/\gamma_cG)$.

As the group $G$ is invariantly $L$-presented, every $\varphi\in\Phi$
induces an endomorphism $\ti\varphi$ of $R\gamma_cF/[R\gamma_cF,F]$. This
yields, that the image of $M(G)$ in $M(G/\gamma_cG)$ has the form
\begin{equation} \label{eqn:ImgMG}
  \la q^{\bar p}, (r^{\bar\pi})\,^{\ti\varphi} \mid
  q\in\Q', r\in\R', \varphi\in\Phi^* \ra.
\end{equation}
This can be used to investigate the $\Phi^*$-module structure of $M(G)$
by considering the finitely generated Dwyer quotients $M_c(G)$. In our algorithm, we use
Hermite normal form computations in a spinning algorithm for computing a
finite generating set of the subgroup in~(\ref{eqn:ImgMG}).  We summarize
our algorithm as follows: Write $G=F/R$.
\begin{tabbing}
  {\scshape DwyerQuotient}( $G$, $c$ ) \\
  \qquad\= Compute an invariant finite $L$-presentation as in 
           Theorem~\ref{thm:ModLp}.\\[0.5ex]
  \>Compute a weighted nilpotent presentation for $G/\gamma_cG$ \\
  \>\qquad\=together with the natural homomorphism $F\to G/\gamma_cG$.\\[0.5ex]
  \>Compute a polycyclic presentation for the group
    $F/[R\gamma_cF,F]$\\
  \>\>together with the natural homomorphism $F\to F/[R\gamma_cF,F]$.\\[0.5ex]
  \>Translate each $\varphi\in\Phi$ to an endomorphism of the group
  $F/[R\gamma_cF,F]$\\
  \>\> and restrict this endomorphism to $(R\gamma_{c+1}F\cap F')/[R\gamma_cF,F]$.\\[0.5ex]
  \>Use the spinning algorithm to compute a finite generating set\\
  \>\> for the image $(R\gamma_{c+1}F\cap F')/[R\gamma_cF,F]$.
\end{tabbing}

\section{Applications}\label{sec:Apps}
The algorithm described in the first part is available in the
{\sf NQL}-package~\cite{NQL} of the computer algebra system {\sf
GAP}; see~\cite{GAP}. We parallelized the algorithm in~\cite{BEH08} to
enlarge the possible depths in the lower central series reached in this
section. We show the successful application of our algorithm to the
following invariantly finitely $L$-presented \emph{testbed groups}
studied in~\cite{Bar03} and~\cite{BEH08}:
\begin{itemize}\addtolength{\itemsep}{-0.75ex}
\item The Grigorchuk group $\Grig$, see~\cite{Gri80}, with its
      invariant finite $L$-pre\-sen\-ta\-tion from~\cite{Lys85}; see
      also~\cite{Gri99} and Example~\ref{ex:ModGri};
\item the twisted twin $\bar\Grig$ of the Grigorchuk group, see~\cite{BS09}, 
      with its invariant finite $L$-presentation from~\cite{BS09};
\item the Grigorchuk super-group $\ti\Grig$, see~\cite{BG02}, with
      its invariant finite $L$-pre\-sen\-ta\-tion from~\cite{Bar03};
\item the Basilica group $\Delta$, see~\cite{GZ02a,GZ02b}, with its
      invariant finite $L$-pre\-sen\-ta\-tion from~\cite{BV05}; and
\item the Brunner-Sidki-Vieira group $\BSV$, see~\cite{BSV99}, with
      its invariant finite $L$-pre\-sen\-ta\-tion from~\cite{Bar03}.
\end{itemize}
In Section~\ref{sec:FG}, we further applied our algorithm to
several generalized Fabry\-kowski-Gupta groups: an infinite family of
finitely $L$-presented groups $\FG_p$ introduced in~\cite{Gri00}. Invariant
finite $L$-pre\-sen\-ta\-tions for these groups were computed
in~\cite{BEH08}.\smallskip

\subsection{Aspects of the implementation of our algorithm in \GAP}
Table~\ref{tab:Perf} shows some performance data of the implementation of
our algorithm in the \NQL-package of the computer-algebra-system \GAP. All
timings displayed below have been obtained on an {\it Intel Pentium Core~2
Quad} with clock speed $2.83$~GHz using a single core. We applied our
algorithm with a time limit of two hours. Then the computations have
been stopped and the total time used to compute a weighted nilpotent
presentation for the quotient $G/\gamma_cG$ and the total time to compute
the Dwyer quotient $M_c(G)$ have been listed. Every application completed
within $1$~GB of memory.

\begin{table}[ht]
  \begin{center}
  \caption{Performance data of our implementation in \GAP}\medskip
  \label{tab:Perf}
  \begin{tabular}{cccc}
    \toprule
    & &\multicolumn{2}{c}{Time (h:min) for}\\
    \rb{$G$}&\rb{$c$}& $G/\gamma_{c+1}G$ & $M_{c+1}(G)$\\
    \midrule
    $\Grig$    & 90 & 1:47 & 0:07\\
    $\bar\Grig$& 54 & 1:44 & 0:09\\ 
    $\ti\Grig$ & 44 & 1:32 & 0:13\\
    $\Delta$   & 42 & 1:31 & 0:16\\
    $\BSV$     & 35 & 1:10 & 0:21\\
    $\FG_3$    & 75 & 1:46 & 0:04\\
    \bottomrule
  \end{tabular} \qquad
  \begin{tabular}{cccc}
    \toprule
    & &\multicolumn{2}{c}{Time (h:min) for}\\
    \rb{$G$}&\rb{$c$}& $G/\gamma_{c+1}G$ & $M_{c+1}(G)$\\
    \midrule
    $\FG_4$    & 71 & 1:50 & 0:07\\
    $\FG_5$    & 55 & 1:40 & 0:04\\
    $\FG_7$    & 46 & 1:40 & 0:03\\
    $\FG_8$    & 56 & 1:54 & 0:06\\
    $\FG_9$    & 61 & 1:44 & 0:06\\
    $\FG_{11}$ & 35 & 1:54 & 0:02\\
    \bottomrule
  \end{tabular}
\end{center}
\end{table}
We note that for the results shown in the remainder of this section we
used a parallel version of the algorithm for computing $G/\gamma_{c+1}G$.

\subsection{On the Dwyer quotients of the testbed-groups}
The Dwyer quotient $M_c(G)=M(G)/\ker\varphi_c$ is a finitely generated abelian
group and hence, it can be described by its abelian invariants or, if
the group is $p$-elementary abelian, by its $p$-rank. Here the list
$(c_1,\ldots,c_n)$ stands for the group $\Z_{c_1}\oplus\cdots\oplus
\Z_{c_n}$. For abbreviation, we will write $a^{[\ell]}$ if the term
$a$ occurs in $\ell$ consecutive places in a list. In the following 
we summarize our computational results for the testbed groups.\smallskip

The Grigorchuk group $\Grig$ was shown in~\cite{Gri80} to be an
explicit counter-example to the general Burnside problem: it is
a finitely generated infinite $2$-torsion group. Furthermore,
the Grigorchuk group is a first example of a group with an
intermediate word-growth. In~\cite{Lys85}, Lys\"enok determined a first
$L$-presentation for the group $\Grig$; see~Example~\ref{ex:ModGri}. Even
though it was already proposed in~\cite{Gri80} that the Grigorchuk
group $\Grig$ is not finitely presentable, a proof was not derived
until~\cite{Gri99} where Grigorchuk explicitly computed the Schur
multiplier of $\Grig$: it is infinitely generated $2$-elementary
abelian. We have computed the Dwyer quotients $M_c(\Grig)$, for $1\leq c\leq
301$. These quotients are $2$-elementary abelian with the following
$2$-ranks
\[
  1,2,3^{[3]},5^{[6]},7^{[12]}, 9^{[24]}, 11^{[48]}, 13^{[96]}, 15^{[110]}.
\]
This suggests the following conjecture.
\begin{conjecture}
  The Grigorchuk group $\Grig$ satisfies 
  \[
   M_c(\Grig) \cong \left\{\begin{array}{cl}
   \Z_2\textrm{ or }(\Z_2)^2,&\textrm{if }c=1\textrm{ or }c=2\textrm{, respectively}\\
   (\Z_2)^{2m+3},&\textrm{if }c\in\{3\cdot 2^m,\ldots,3\cdot 2^{m+1}-1\}
   \end{array}\right\},
  \]
  with $m\in\N_0$. 
\end{conjecture}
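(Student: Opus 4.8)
The plan is to exploit the $\Phi^*$-module structure from Lemma~\ref{lem:ModStruc} with $\Phi=\{\sigma\}$, so that $M(\Grig)$ becomes a module over the free monoid $\langle\sigma\rangle$. By Theorem~\ref{thm:GensMG} applied to the modified $L$-presentation of Example~\ref{ex:ModGri}, this module is generated by the three elements displayed in~(\ref{eqn:SchuGri}): the ``constant'' generator $g_0=b^2(bcd)^{-2}c^2d^2$ together with the two Lys\"enok relators $\rho_1=(ad)^4a^{-4}d^{-4}$ and $\rho_2=(adacac)^4a^{-12}c^{-8}d^{-4}$. By~(\ref{eqn:ImgMG}), the Dwyer quotient $M_c(\Grig)=\im\varphi_c$ is spanned over $\mathbb{F}_2$ by the images of $g_0$ and of $\sigma^k\rho_1,\sigma^k\rho_2$ for $k\in\N_0$, and an element $\sigma^k\rho_i$ maps to $0$ precisely when it lies in $[R\gamma_cF,F]=[R,F]\gamma_{c+1}F$. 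Thus the whole problem reduces to understanding, for each generator, the largest $w$ with the generator lying in $\gamma_wF\cdot[R,F]$ --- its \emph{weight} --- together with the $\mathbb{F}_2$-independence of the surviving images.

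Next I would analyse how $\sigma$ acts on weights. The crucial claim driving the periodicity is that $\sigma$ doubles weight: $\sigma^k\rho_i$ has weight exactly $3\cdot 2^k$ for $i=1,2$, while $g_0$ has weight at most $3$. This reflects the self-similar origin of the Lys\"enok substitution, which is induced by the branch embedding $\psi\colon\Grig\hookrightarrow\Grig\wr\Z_2$ and therefore acts as the degree-doubling endomorphism on the associated graded Lie algebra of $\Grig$; the explicit computations in~\cite{Gri99} already exhibit this behaviour at the lowest levels. Granting the weight-doubling, a generator $\sigma^k\rho_i$ survives in $M_c(\Grig)$ iff $3\cdot 2^k\le c$, i.e.\ iff $k\le\lfloor\log_2(c/3)\rfloor$. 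For $c\in\{3\cdot 2^m,\dots,3\cdot 2^{m+1}-1\}$ this leaves exactly the indices $k\in\{0,\dots,m\}$, contributing $2(m+1)$ generators from the two $\sigma$-families, to which the single generator $g_0$ adds one more, for a total of $2m+3$ --- the rank claimed in the conjecture.

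It then remains to prove that these $2m+3$ surviving generators are $\mathbb{F}_2$-linearly independent, generation being immediate from Theorem~\ref{thm:GensMG}. Here I would set up a recursion along $\sigma$: since $\sigma$ doubles weight, it induces a map $M_{\lfloor c/2\rfloor}(\Grig)\to M_c(\Grig)$ sending $\sigma^k\rho_i$ to $\sigma^{k+1}\rho_i$, so that independence at level $m$ follows from independence of the image (by the inductive hypothesis together with injectivity of $\sigma$) combined with independence of the three newly anchored generators $g_0,\rho_1,\rho_2$ from that image. The induction is anchored by a direct verification at $m=0$ (the cases $c=3,4,5$) and by the separately checked values $c=1,2$.

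The hard part will be controlling $[R,F]$ precisely enough to rule out \emph{unexpected} relations --- that is, to show that the only collapses among the generators are those forced by weight, and that no two generators of the same weight become dependent. Equivalently one must pin down the associated graded structure of $F/[R,F]$ rather than of $\Grig$ alone, and prove that the $\sigma$-action on $M(\Grig)$ is injective with the stated degree-doubling interaction with the Dwyer filtration. As already noted in the excerpt, $F/[R,F]$ need not be residually nilpotent even when $\Grig$ is, so the interplay between $[R,F]$ and the lower central series of $F$ is delicate; this is exactly why the statement is presented as a conjecture supported by the computations of Section~\ref{sec:Apps} rather than as a theorem.
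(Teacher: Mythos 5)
The first thing to note is that the paper does not prove this statement: it is presented explicitly as a conjecture, supported only by the machine computation of $M_c(\Grig)$ for $1\le c\le 301$ using the {\scshape DwyerQuotient} algorithm of Section~\ref{sec:FGQuot}. So there is no proof in the paper to compare yours against; the relevant question is whether your outline closes the gap the author left open, and it does not. Your reduction is sound as far as it goes: by Theorem~\ref{thm:GensMG} and~(\ref{eqn:ImgMG}) the quotient $M_c(\Grig)$ is indeed spanned by the images of the elements in~(\ref{eqn:SchuGri}), and the rank count $2m+3$ does follow arithmetically from the two claims you isolate, namely that $\sigma^k\rho_i$ has weight exactly $3\cdot 2^k$ with respect to the filtration of $R/[R,F]$ by the subgroups $\gamma_wF[R,F]/[R,F]$, and that the surviving generators stay independent modulo $\gamma_{c+1}F[R,F]$.

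But both of these claims are precisely the content of the conjecture, and neither is established by your argument. The weight-doubling is justified only by analogy with the degree-doubling action of $\sigma$ on the graded Lie algebra of $\Grig$ itself; the filtration that matters here lives on $F/[R,F]$, and as the paper points out (citing~\cite{Mik05} and~\cite{Blu07}) the passage from $G$ to $F/[R,F]$ can destroy residual nilpotence, so there is no a priori transfer of graded information from one group to the other. Likewise, Grigorchuk's result that the generators in~(\ref{eqn:SchuGri}) are independent in $M(\Grig)$ does not yield independence of their images in $M_c(\Grig)$: a nontrivial combination of elements each of weight at most $c$ could still fall into $\gamma_{c+1}F[R,F]$, and your inductive step presupposes both the injectivity of the $\sigma$-action on $M(\Grig)$ and that it interacts with the Dwyer filtration exactly as claimed --- which is what was to be shown. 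You acknowledge this yourself in your final paragraph. What you have, then, is a correct reformulation of the conjecture as two concrete statements about weights and independence in $F/[R,F]$, together with a heuristic for why they should hold: a useful road map consistent with the computational data, but not a proof, and not something the paper claims to possess either.
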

Further experiments suggest that the Schur multiplier of the Grigorchuk
group $\Grig$ has the $\{\sigma\}^*$-module structure, as given by
Lemma~\ref{lem:ModStruc}, of the form $\Z_2 \oplus (\Z_2[\sigma])^2$
where $\sigma$ fixes the first component.\medskip

The twisted twin $\bar\Grig$ of the Grigorchuk group was introduced
in~\cite{BS09}. It is invariantly finitely $L$-presented by
\[
  \la\{a,b,c,d\}\mid\{a^2, b^2, c^2, d^2\}\mid\{\bar\sigma\}
     \mid\{ [d^a,d], [d,c^ab], [d,(c^ab)^c], [d,(c^ab)^c], [c^ab,cb^a]\} \ra
\]
where $\ti\sigma$ is the free group endomorphism induced by the mapping
\[ 
  \ti\sigma\colon~\left\{\begin{array}{rcl}
  a &\mapsto& c^a \\
  b &\mapsto& d \\
  c &\mapsto& b^a \\
  d &\mapsto& c.
  \end{array}\right.
\]
We have computed the Dwyer quotients $M_c(\bar\Grig)$, for $1\leq c\leq 144$.
These quotients are $2$-elementary abelian with the following $2$-ranks
\[
  2,5,7,
  8^{[2]},   11^{[2]},
  12^{[4]},  15^{[4]},
  16^{[8]},  19^{[8]},
  20^{[16]}, 23^{[16]},
  24^{[32]}, 27^{[32]},
  28^{[17]}.
\]
This suggests the following conjecture.
\begin{conjecture}
  The twisted twin $\bar\Grig$ of the Grigorchuk group satisfies
  \[
   M_c(\bar\Grig) \cong \left\{\begin{array}{cl}
    (\Z_2)^{2},~(\Z_2)^{5},\textrm{ or }(\Z_2)^{7},&\textrm{if }c=1,~c=2,
    \textrm{ or }c=3,\textrm{ resp.}\\
    (\Z_2)^{4(m+1)+4},&\textrm{if }c\in\{2^{m+2},\ldots,2^{m+2}+2^{m+1}-1\}\\
    (\Z_2)^{4(m+1)+7},&\textrm{if }c\in\{2^{m+2}+2^{m+1},\ldots,2^{m+3}-1\}
   \end{array}\right\},
  \]
  with $m\in\N_0$. 
\end{conjecture}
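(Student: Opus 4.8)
The plan is to follow the template of Grigorchuk's explicit computation for $\Grig$, recalled in Example~\ref{ex:ModGri}, and adapt it to the self-similar data of $\bar\Grig$ and its defining substitution $\bar\sigma$. First I would apply Theorem~\ref{thm:ModLp} to the given invariant $L$-presentation of $\bar\Grig$ to produce one of the form $\la\X\mid\Q'\cup{\mc B}\mid\{\bar\sigma\}\mid\R'\ra$ with $\Q',\R'\subset F'$, and then invoke Theorem~\ref{thm:GensMG} to obtain the explicit generating set $\{\bar q,\overline{r^{\bar\sigma^k}}\mid q\in\Q',r\in\R',k\in\N_0\}$ for $M(\bar\Grig)$ inside $R/[R,F]$. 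Since $\bar\sigma$ is a single endomorphism, $\Phi^*=\{\bar\sigma^k\mid k\in\N_0\}$, so by Lemma~\ref{lem:ModStruc} the Schur multiplier has the concrete shape $A\oplus\bigoplus_{k}\bar\sigma^k(B)$ with $A,B$ finitely generated abelian, and the whole problem reduces to pinning down $A$, $B$, and the action of $\bar\sigma$.

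The second step is to establish that $M(\bar\Grig)$, and hence every $M_c(\bar\Grig)$, is $2$-elementary abelian. Because all four generators $a,b,c,d$ are involutions, I would show that every module generator $r\in R\cap F'$ satisfies $r^2\in[R,F]$ by a commutator-calculus argument, rewriting $r^2$ modulo $[R,F]$ using the relations $a^2,b^2,c^2,d^2$ and the fact that $R/[R,F]$ is central in $F/[R,F]$; this is the analogue of the corresponding statement for $\Grig$ and I expect it to follow formally once the relators of $\R'$ are written out explicitly.

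The heart of the argument is the Dwyer filtration. For each module generator $g$ I would determine its \emph{weight} $w(g)$, the largest $c$ for which $\varphi_c(g)$ is nontrivial in $M(\bar\Grig/\gamma_c\bar\Grig)$, equivalently the lower-central-series depth of the corresponding element of $R\cap F'$ modulo $[R,F]$. The phenomenon responsible for the dyadic pattern is a \emph{weight-doubling lemma}: the self-similar substitution $\bar\sigma$ doubles lower-central-series depth, so $w(\bar\sigma^k(b))\approx 2^k\,w(b)$. Concretely, I expect the conjecture to reduce to an isomorphism $M(\bar\Grig)\cong A\oplus\Z_2[\bar\sigma]\oplus(\Z_2[\bar\sigma])^3$, where $A$ is a finite $2$-elementary group of rank $7$ concentrated in weights $1,2,3$ (accounting for the ranks $2,5,7$ at $c=1,2,3$), the rank-$1$ free summand is seeded in weight $4$, and the rank-$3$ free summand in weight $6$; this mirrors Grigorchuk's structure $\Z_2\oplus(\Z_2[\sigma])^2$ for $\Grig$. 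Granting the weight-doubling lemma, $\rk M_c(\bar\Grig)$ equals the $\Z_2$-dimension of the module summands of weight at most $c$: the shifts of the rank-$1$ summand enter at the weights $2^{m+2}$ (jump $+1$) and those of the rank-$3$ summand at $3\cdot2^{m+1}$ (jump $+3$), which reproduces exactly the claimed step function with half-block ranks $4(m+1)+4$ and $4(m+1)+7$.

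The main obstacle is making the weight-doubling lemma precise and proving that no collapse occurs — that the surviving generators remain $\Z_2$-linearly independent in every $M_c(\bar\Grig)$ and that the lower-central-series quotients introduce no relations beyond those already present in $M(\bar\Grig)$. This requires a detailed understanding of the associated graded Lie algebra of $\bar\Grig$ together with the grading shift induced by $\bar\sigma$, a self-similar analysis that has so far been carried out only for the Grigorchuk group itself. I would attempt it by induction on $m$, using the recursive description of $\bar\Grig$ to relate the segment of the Dwyer filtration in the dyadic block $[2^{m+2},2^{m+3})$ to the preceding block, with the computed data up to $c=144$ anchoring the base cases and confirming that the two half-block ranks are correctly identified.
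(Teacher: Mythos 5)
The statement you are trying to prove is presented in the paper as a \emph{conjecture}, not a theorem: the paper's only support for it is the table of computed Dwyer quotients $M_c(\bar\Grig)$ for $1\leq c\leq 144$, obtained from the algorithm of Sections~\ref{sec:ModLp}--\ref{sec:FGQuot}, after which the author writes ``this suggests the following conjecture.'' So there is no proof in the paper to match your argument against, and your proposal does not close the gap either. The decisive step in your outline --- the ``weight-doubling lemma'' asserting that $\bar\sigma$ doubles the lower-central-series depth of the module generators, together with the claim that no collapse occurs among their images in $M_c(\bar\Grig)$ --- is exactly the content of the conjecture restated in different language, and you acknowledge yourself that you do not know how to prove it. Until that lemma is established (which would require controlling the graded Lie algebra of $\bar\Grig$ and the grading shift induced by $\bar\sigma$, something not done in this paper for any of the testbed groups), the argument is a research program anchored on the same finite computation the paper already reports, not a proof.

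A secondary concrete problem: your proposed decomposition $M(\bar\Grig)\cong A\oplus\Z_2[\bar\sigma]\oplus(\Z_2[\bar\sigma])^3$ with a \emph{finite} summand $A$ of rank $7$ concentrated in weights $1,2,3$ conflicts with the module structure the paper records for this group, namely $M(\bar\Grig)\cong(\Z_2[\bar\sigma])^4$ as a $\{\bar\sigma\}^*$-module (attributed, with proof, to the Bartholdi--Siegenthaler paper cited as \cite{BS09}). In a free module of rank $4$ there is no finite direct summand; the initial ranks $2,5,7$ must instead be accounted for by the weights of the four free generators and of their first few $\bar\sigma$-translates. Your bookkeeping of which summand is ``seeded'' at which weight would therefore have to be redone from that decomposition, and the correct seeding is again something only the numerical data currently tells you.
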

Further experiments suggest that the Schur multiplier of
$\bar\Grig$ has the $\{\bar\sigma\}^*$-module structure, as given by
Lemma~\ref{lem:ModStruc}, of the form $(\Z_2[\bar\sigma])^4$; for a
proof see~\cite{BS09}.\medskip

The Grigorchuk super-group $\ti\Grig$ was introduced in~\cite{BG02}. It
contains the Grigorchuk group $\Grig$ as an infinite-index subgroup
and it is another example of a group with an intermediate word-growth.
In~\cite{Bar03}, it was shown that $\ti\Grig$ admits the invariant
finite $L$-presentation $\la\{\ti a,\ti b,\ti c,\ti d\}\mid
\emptyset\mid\{\ti\sigma\}\mid{\mc R}\ra$ where
\[
  {\mathcal R}=\big\{ a^2,[\tilde b,\tilde c],[\tilde c,\tilde c^a],
                      [\tilde c,\tilde d^a], [\tilde d,\tilde d^a], 
                      [\tilde c^{a\tilde b},(\tilde c^{a\tilde b})^a],
                      [\tilde c^{a\tilde b},(\tilde d^{a\tilde b})^a],
                      [\tilde d^{a\tilde b}, (\tilde d^{a\tilde b})^a]\big\}
\]
and $\ti\sigma$ is the free group endomorphism induced by the mapping
\[
  \tilde\sigma\colon\left\{
  \begin{array}{rcl}
           a&\mapsto&a\tilde b a\\
    \tilde b&\mapsto&\tilde d\\
    \tilde c&\mapsto&\tilde b\\
    \tilde d&\mapsto&\tilde c\,.
  \end{array}\right.
\]
The Schur multiplier of the group $\ti\Grig$ is still unknown. We have
computed the Dwyer quotients $M_c(\ti\Grig)$, for $1\leq c\leq 232$. These
quotients are $2$-elementary abelian with the following $2$-ranks
\[
  3,6,7,
   9^{[2]},  11^{[2]},
  13^{[4]},  15^{[4]},
  17^{[8]},  19^{[8]},
  21^{[16]}, 23^{[16]},
  25^{[32]}, 27^{[32]}, 
  29^{[64]}, 31^{[41]}.
\]
This suggests the following conjecture. 
\begin{conjecture}
  The Grigorchuk super-group $\ti\Grig$ satisfies
  \[
   M_c(\ti\Grig) \cong \left\{\begin{array}{cl}
   (\Z_2)^3,(\Z_2)^6,\textrm{ or }(\Z_2)^7,&\textrm{if }c=1,2,\textrm{ or }3\textrm{, respectively}\\
   (\Z_2)^{4m+5},&\textrm{if }c\in\{2\cdot 2^m,\ldots,3\cdot 2^{m}-1\}\\
   (\Z_2)^{4m+7},&\textrm{if }c\in\{3\cdot 2^m,\ldots,2\cdot 2^{m+1}-1\}
   \end{array}\right\},
  \]
  with $m\in\N$. 
\end{conjecture}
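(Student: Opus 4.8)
The plan is to make the conjectured periodicity rigorous by combining the explicit $\{\ti\sigma\}^*$-module description of $M(\ti\Grig)$ with a quantitative weight-doubling analysis of the substitution $\ti\sigma$. First I would apply Theorem~\ref{thm:ModLp} to the given presentation $\la\{\ti a,\ti b,\ti c,\ti d\}\mid\emptyset\mid\{\ti\sigma\}\mid\mc R\ra$: since the only relator outside $F'$ is $a^2$, this produces a presentation $\la\X\mid\Q'\cup\mc B\mid\{\ti\sigma\}\mid\R'\ra$ with $\Q',\R'\subset F'$. Theorem~\ref{thm:GensMG} then supplies an explicit generating set for $M(\ti\Grig)$ consisting of the images of $\Q'$ together with the $\ti\sigma$-orbit $\{\,\overline{r^{\ti\sigma^k}}\mid r\in\R',\,k\geq 0\,\}$ in $F/[R,F]$, and Lemma~\ref{lem:ModStruc} exhibits $M(\ti\Grig)$ as a finitely generated $\{\ti\sigma\}^*$-module. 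A first routine step is to check that this module is $2$-elementary abelian: for each generator $g$ one verifies $g^2\in[R,F]$, so that every Dwyer quotient $M_c(\ti\Grig)=M(\ti\Grig)/\ker\varphi_c$ is a vector space over the field of two elements and the problem reduces to counting dimensions.

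The heart of the argument is to control the Dwyer filtration in terms of this grading. From the Blackburn--Evens sequence one has $\ker\varphi_c\cong(R\cap\gamma_{c+1}F)/([R,F]\cap\gamma_{c+1}F)$, so that $M_c(\ti\Grig)\cong\im\varphi_c$ is generated, via~(\ref{eqn:ImgMG}), by the images of the same elements, and a module generator $g$ survives in $M_c(\ti\Grig)$ precisely when $g\notin\gamma_{c+1}F\,[R,F]/[R,F]$. Thus I would assign to each generator its \emph{weight}, the largest $w$ with $g\in\gamma_w F\,[R,F]/[R,F]$, and compute $\rk M_c(\ti\Grig)$ as the number of $\Z_2$-independent generators of weight at most $c$. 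The decisive input is a lemma quantifying how $\ti\sigma$ shifts weight: one must show that $\ti\sigma$ multiplies the lower-central weight of a generator of the Schur multiplier asymptotically by $2$, so that the orbit element $r^{\ti\sigma^k}$ acquires weight on the order of $2^k$. The thresholds $2\cdot 2^m$ and $3\cdot 2^m$ appearing in the conjecture should then emerge as the weights at which successive pairs of orbit generators first enter the image, each such crossing contributing exactly two new $\Z_2$-summands and producing the offsets $4m+5$ and $4m+7$, with the finitely many exceptional generators coming from $\Q'$ fixing the constant part and the base cases $c\in\{1,2,3\}$.

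The main obstacle, I expect, is precisely this weight-doubling lemma together with the accompanying independence statement, and it is here that a proof must go strictly beyond the finite computations summarised in Section~\ref{sec:Apps}. Determining the exact weight of each image $r^{\ti\sigma^k}$ requires commutator calculus in the nilpotent quotients $F/[R\gamma_cF,F]$ that is uniform in both $k$ and $c$, whereas the algorithm of Section~\ref{sec:FGQuot} evaluates these data only for finitely many $c$. In particular one has to rule out the two failure modes that would break periodicity: an unexpected collapse, where some $\ti\sigma^k$-image lies deeper in $\gamma_\bullet F\,[R,F]$ than the doubling heuristic predicts, and a hidden $\Z_2$-relation among generators of equal weight, which would depress the rank below $4m+5$ or $4m+7$. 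Establishing a self-similar recursion $M_c(\ti\Grig)\to M_{\lfloor c/2\rfloor}(\ti\Grig)$ induced by $\ti\sigma$, and proving it is injective on each graded piece of weight exceeding one, would simultaneously settle the weight-doubling lemma and the independence, thereby reducing the conjecture to the recorded base cases.
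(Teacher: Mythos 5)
The statement you are addressing is presented in the paper as a \emph{conjecture}, not a theorem: the paper's only support for it is the computation of the Dwyer quotients $M_c(\ti\Grig)$ for $1\leq c\leq 232$ via the algorithm of Sections~\ref{sec:ModLp}--\ref{sec:FGQuot}, and the paper explicitly states that the Schur multiplier of $\ti\Grig$ is still unknown. Your proposal is therefore not being measured against an existing proof, and indeed it does not supply one. The outline correctly assembles the ingredients the paper does provide (Theorem~\ref{thm:ModLp}, Theorem~\ref{thm:GensMG}, Lemma~\ref{lem:ModStruc}, the identification $\ker\varphi_c\cong(R\cap\gamma_{c+1}F)/([R,F]\cap\gamma_{c+1}F)$, and the generating set~(\ref{eqn:ImgMG})), and it correctly locates the difficulty in a uniform-in-$c$ statement about how $\ti\sigma$ pushes generators down the lower central series. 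But the two decisive claims --- the weight-doubling lemma and the $\Z_2$-independence of the surviving generators --- are only named, not proved, and you acknowledge as much. These claims are not a technical afterthought: they \emph{are} the conjecture, and without them your argument establishes nothing beyond what the finite computation already shows.

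Two further points deserve care. First, your ``routine step'' that $M(\ti\Grig)$ is $2$-elementary abelian (each generator $g$ satisfying $g^2\in[R,F]$) is itself not routine: for the Grigorchuk group the analogous fact is a theorem of Grigorchuk requiring explicit work in $F/[R,F]$, and for the super-group the paper records elementary abelianness only as a computational observation on the quotients $M_c(\ti\Grig)$ for $c\leq 232$. Second, your survival criterion counts a generator as contributing to $M_c(\ti\Grig)$ when it does not individually lie in $\gamma_{c+1}F[R,F]/[R,F]$; the rank of $M_c(\ti\Grig)$ is the dimension of the span of all generators modulo $\ker\varphi_c$, so you must also control linear dependence among distinct orbit elements of comparable weight --- this is exactly the ``hidden $\Z_2$-relation'' failure mode you mention, and nothing in the proposal rules it out. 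The suggested self-similar recursion $M_c(\ti\Grig)\to M_{\lfloor c/2\rfloor}(\ti\Grig)$ induced by $\ti\sigma$ is an attractive idea and is in the spirit of how such periodicities are established for branch groups, but constructing it and proving injectivity on the relevant graded pieces is an open problem essentially equivalent to the conjecture itself; as written, the proposal is a research plan rather than a proof.
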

Further experiments suggest that the Schur multiplier of the
Grigorchuk super-group has the $\{\ti\sigma\}^*$-module structure,
as given by Lemma~\ref{lem:ModStruc}, of the form $(\Z_2)^3 \oplus
(\Z_2[\ti\sigma])^4$, where $\ti\sigma$ cyclically permutes the first
component.\medskip

The Basilica group $\Delta$ was introduced in~\cite{GZ02b,GZ02a} as
a torsion-free group defined by a three-state automaton.  Bartholdi
and Vir\'ag~\cite{BV05} computed the following invariant finite
$L$-pre\-sen\-ta\-tion:
\[
  \Delta \cong \la\{a,b\}\mid \emptyset \mid\{\sigma\}\mid\{[a,a^b]\}\ra
\]
where $\sigma$ is the free group endomorphism induced by the mapping
\[
  \sigma\colon\left\{
  \begin{array}{rcl}
   a &\mapsto& b^2\\
   b &\mapsto& a.
  \end{array}\right.
\]
We have computed the Dwyer quotients $M_c(\Delta)$, for $1\leq c\leq 103$. These
quotients satisfy the following conjecture.
\begin{conjecture}
  The Basilica group $\Delta$ satisfies
  \[
    M_c(\Delta) \cong \Z^2\oplus\bigoplus_{\ell\in\N}{\mc A}_\ell(c),
    \qquad\textrm{for each }c\geq 2,
  \]
  where the groups ${\mc A}_\ell(c)$ are given as follows: 
  \[
    {\mc A}_1(c) = \left\{\begin{array}{cl}
     0, & \textrm{if }c\in\{1,\ldots,5\}\\
     \Z_{2^{2(m+1)}},& \textrm{if }c\in\{2m+6,2m+7\}
    \end{array}\right\}
  \]
  and 
  \[
    {\mc A}_\ell(c) = \left\{\begin{array}{cl}
         0,&\textrm{if }c\in\{1,\ldots,3\cdot 2^{\ell+1}-1\}\\
    \Z_{2^{2m+1}},&\textrm{if }
    c\in\{(3+m)\cdot 2^{\ell+1},\ldots,(3+m)\cdot 2^{\ell+1}+2^{\ell-1}-1\}\\
    \Z_{2^{2m+2}},&\textrm{if }
    c\in\{(3+m)\cdot 2^{\ell+1}+2^{\ell-1},\ldots,(4+m)\cdot 2^{\ell+1}-1\}
  \end{array}\right\}
  \]
  with $m\in\N_0$.
  Hence, the Basilica group $\Delta$ is not finitely
  presentable.
\end{conjecture}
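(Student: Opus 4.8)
The plan is to exploit the $\{\sigma\}^*$-module structure on $M(\Delta)$ furnished by Lemma~\ref{lem:ModStruc}, together with the self-similarity of $\sigma$, and to locate the Dwyer filtration $\ker\varphi_1\le\ker\varphi_2\le\cdots$ explicitly inside this module. First I would record the starting data. The given $L$-presentation already has $\Q=\emptyset$, and its single relator $r=[a,a^b]$ lies in $F'$; since $\Delta/\Delta'\cong\Z^2$ we have $\hirsch(\Delta/\Delta')=\rk(F)=2$, so Theorem~\ref{thm:ModLp} applies with ${\mc B}=\emptyset$ and $\R'=\{r\}$. Hence Theorem~\ref{thm:GensMG} shows that $M(\Delta)$ is generated, as an abelian group, by the central images $\overline{r^{\sigma^k}}$ in $F/[R,F]$ for $k\in\N_0$. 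Rewriting $r=[a,a^b]=[a,[a,b]]$ exhibits $r\in\gamma_3F$; the first goal is to pin down all relations among the $\overline{r^{\sigma^k}}$, that is, to determine $M(\Delta)$ as a $\Z[\sigma]$-module.

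The arithmetic engine is the action of $\sigma$ on the associated graded Lie ring. On $H_1(\Delta)\cong\Z^2$ the endomorphism $\sigma$ sends an exponent vector $(p,q)$ to $(q,2p)$, so that $\sigma^2$ acts as multiplication by $2$; since a weight-$w$ Lie bracket is multilinear of degree $w$ in the generators, $\sigma^2$ multiplies the weight-$w$ component of the associated graded of $F$ by $2^{w}$. In particular a direct computation in the free Lie ring gives that $\sigma^2(r)$ equals $2^{3}r$ modulo $\gamma_4F$, and more generally $\overline{r^{\sigma^{k+2}}}$ equals $2^{3}\,\overline{r^{\sigma^{k}}}$ modulo generators of higher weight. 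This doubling-per-two-steps is precisely the mechanism that should produce the $\lfloor\log_2(\cdot)\rfloor$ indexing, the $2$-power exponents $2^{2m+1}$, $2^{2m+2}$, $2^{2(m+1)}$, and the doubling of the lengths of the constancy intervals in the conjecture. Concretely, I would prove a \emph{weight lemma}: for each $k$, determine the largest $c=w(k)$ with $\overline{r^{\sigma^k}}\in\ker\varphi_c$, i.e.\ with $r^{\sigma^k}\in\gamma_{c+1}F\,[R,F]$, as an explicit function of $k$, together with the precise $2$-adic dependencies among consecutive generators.

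With the weight function $w(k)$ and the module relations in hand, assembling the Dwyer quotients is bookkeeping. Using $[R\gamma_cF,F]=[R,F]\gamma_{c+1}F$ and the description~(\ref{eqn:ImgMG}) of $\im\varphi_c$, the quotient $M_c(\Delta)=M(\Delta)/\ker\varphi_c$ is obtained by discarding the generators $\overline{r^{\sigma^k}}$ whose weight $w(k)$ exceeds $c$ and reading off, from the surviving generators and their $2$-adic relations, the stable free summand $\Z^2$ (coming from the lowest-weight generators, which stay torsion-free for all $c\ge 2$) and the cyclic $2$-groups ${\mc A}_\ell(c)$. Here the index $\ell$ should label the successive $\sigma$-``generations'' of generators, and the interval endpoints $3\cdot 2^{\ell+1}$, $(3+m)\cdot 2^{\ell+1}+2^{\ell-1}$, and so on, should emerge exactly as the thresholds at which $w(k)$ crosses $c$. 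I would then match the resulting orders against $2^{2m+1}$, $2^{2m+2}$, $2^{2(m+1)}$, confirming the decomposition, and use the computed data for $1\le c\le 103$ to fix the finitely many base cases.

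The main obstacle is the weight lemma of the second paragraph. The difficulty is that $F/[R,F]$ is an infinitely presented central extension that need not be residually nilpotent --- as the paper notes, citing~\cite{Mik05} and~\cite{Blu07} --- so one cannot simply pass to a single nilpotent quotient, but must control the entire Dwyer filtration uniformly in $c$. I would attack this by finding a closed form for the image of $r^{\sigma^k}$ in the associated graded Lie ring of $F/[R,F]$, exploiting the self-similar recursion that $\sigma$ encodes on the rooted binary tree, and proving by induction on $\ell$ that this recursion stabilises into the claimed periodic pattern. The delicate point is to rule out unexpected collisions or extra relations beyond those forced by the identity $\sigma^2\equiv 2$; establishing that no such degeneracies occur, uniformly in $c$, is where the real work lies, and is presumably why the statement is at present only a conjecture.
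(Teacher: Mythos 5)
This statement is one of the paper's \emph{conjectures}, not a theorem: the only support the paper offers is the machine computation of $M_c(\Delta)$ for $1\le c\le 103$ with the {\scshape DwyerQuotient} algorithm of Section~\ref{sec:FGQuot}, followed by pattern recognition. There is therefore no proof in the paper to match your argument against, and your proposal does not supply one either. Your setup is correct and consistent with the paper's machinery: since $\Q=\emptyset$, $[a,a^b]\in F'$ and $\hirsch(\Delta/\Delta')=2=\rk(F)$, Theorem~\ref{thm:ModLp} indeed yields ${\mc B}=\emptyset$, and Theorem~\ref{thm:GensMG} gives $M(\Delta)=\la\,\overline{r^{\sigma^k}}\mid k\in\N_0\,\ra$ with $r=[a,a^b]=[a,[a,b]]\in\gamma_3F$; the observation that $\sigma^2$ induces multiplication by $2$ on $F/F'$, hence by $2^w$ on the weight-$w$ layer of the free Lie ring, is a reasonable heuristic for why powers of $2$ and interval-doubling appear.

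The genuine gap is the ``weight lemma'' you defer to, and it is not a technicality --- it \emph{is} the conjecture. To compute $M_c(\Delta)=M(\Delta)/\ker\varphi_c$ you must determine, for every $k$ and $c$, whether a given $\Z$-linear combination of the $\overline{r^{\sigma^k}}$ lies in $\gamma_{c+1}F\,[R,F]$ modulo $[R,F]$; the free Lie ring of $F$ only controls membership in $\gamma_{c+1}F$, whereas the quotient by $[R,F]$ introduces exactly the ``unexpected collisions'' you mention, and the paper explicitly warns (citing~\cite{Mik05,Blu07}) that $F/[R,F]$ need not be residually nilpotent even when $G$ is. Nothing in your outline controls these relations uniformly in $c$, identifies which generators contribute the free summand $\Z^2$ versus the torsion pieces ${\mc A}_\ell(c)$, or verifies the specific interval endpoints $(3+m)\cdot 2^{\ell+1}+2^{\ell-1}$ and exponents $2^{2m+1},2^{2m+2},2^{2(m+1)}$; the concluding non-finite-presentability claim additionally needs infinitely many nonzero ${\mc A}_\ell$, which the outline does not deliver. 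As you concede in your final sentence, this is precisely why the statement remains a conjecture; a correct account here would either reproduce the paper's computational evidence or close the weight lemma, and your proposal does neither.
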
\medskip

The Brunner-Sidki-Vieira group $\BSV$ was introduced in~\cite{BSV99}
as a just-non-solvable, torsion-free group acting on the binary tree. The
authors also gave the following invariant finite $L$-presentation:
\[
 \BSV \cong \la\{a,b\}\mid\emptyset\mid\{\varepsilon\}
                      \mid\{[b,b^a],[b,b^{a^3} ]\}\ra
\]
where $\varepsilon$ is the free group endomorphism induced by the mapping
\[
  \varepsilon\colon\left\{
  \begin{array}{rcl}
    a & \mapsto & a^2\\
    b & \mapsto & a^2\,b^{-1}\,a^2.
  \end{array}\right.
\]
We have computed the Dwyer quotients $M_c(\BSV)$, for $1\leq c\leq 53$.
These quotients satisfy the following conjecture.
\begin{conjecture}
  The Brunner-Sidki-Vieira group $\BSV$ satisfies 
  \[
    M_c(\BSV) \cong \Z^2 \oplus {\mc A}(c)
    \oplus \bigoplus_{\ell\in\N} {\mc B}_\ell(c)
    \oplus \bigoplus_{\ell\in\N} {\mc C}_\ell(c),
    \qquad\textrm{for each }  c\geq 2,
  \] 
  where the groups ${\mc A}(c)$, ${\mc B}_\ell(c)$, and ${\mc C}_\ell(c)$
  are given as follows: 
  \begin{eqnarray*}
    {\mc A}(c)&=&\left\{ \begin{array}{cl}
      0,&\textrm{if }c\in\{1,\ldots,3\} \\
    \Z_{2^{2m+1}},&\textrm{if }c\in\{2m+4, 2m+5 \}
    \end{array}\right\}
  \end{eqnarray*}
  with $m\in\N_0$. Additionally, for each $\ell\in\N$, we have
  \begin{eqnarray*}
    {\mc B}_\ell(c)&=&\left\{ \begin{array}{cl}
      0,&\textrm{if }c\in\{1,\ldots,5\cdot 2^{\ell-1}-1\} \\
    \Z_{2^{4m+1}},&\textrm{if }c\in\{2^{\ell+2}m+5\cdot 2^{\ell-1},\ldots,
                                    2^{\ell+2}m+6\cdot 2^{\ell-1}-1\}\\
    \Z_{2^{4m+2}},&\textrm{if }c\in\{2^{\ell+2}m+6\cdot 2^{\ell-1},\ldots,
                                    2^{\ell+2}m+10\cdot 2^{\ell-1}-1\}\\
    \Z_{2^{4m+4}},&\textrm{if }c\in\{2^{\ell+2}m+10\cdot 2^{\ell-1},\ldots,
                                    2^{\ell+2}m+13\cdot 2^{\ell-1}-1\}\\
    \end{array}\right\}
  \end{eqnarray*}
  and
  \begin{eqnarray*}
    {\mc C}_\ell(c)&=&\left\{ \begin{array}{cl}
      0,&\textrm{if }c\in\{1,\ldots,9\cdot 2^{\ell-1}-1\} \\
    \Z_{2^{4m+1}},&\textrm{if }c\in\{2^{\ell+2}m+9\cdot 2^{\ell-1},\ldots,
                                    2^{\ell+2}m+12\cdot 2^{\ell-1}-1\}\\
    \Z_{2^{4m+2}},&\textrm{if }c\in\{2^{\ell+2}m+12\cdot 2^{\ell-1},\ldots,
                                    2^{\ell+2}m+14\cdot 2^{\ell-1}-1\}\\
    \Z_{2^{4m+3}},&\textrm{if }c\in\{2^{\ell+2}m+14\cdot 2^{\ell-1},\ldots,
                                    2^{\ell+2}m+16\cdot 2^{\ell-1}-1\}\\
    \Z_{2^{4m+4}},&\textrm{if }c\in\{2^{\ell+2}m+16\cdot 2^{\ell-1},\ldots,
                                    2^{\ell+2}m+17\cdot 2^{\ell-1}-1\}\\
    \end{array}\right\}
  \end{eqnarray*}
  with $m\in\N_0$.
  Hence, the Brunner-Sidki-Vieira group $\BSV$ is not finitely
  presentable.
\end{conjecture}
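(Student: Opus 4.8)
The plan is to combine the explicit generating set furnished by Theorem~\ref{thm:GensMG} with a weight analysis of how the defining endomorphism $\varepsilon$ interacts with the Dwyer filtration. For $\BSV$ we have $\Q=\emptyset$, $\rk(F)=2$ and $\hirsch(\BSV/\BSV')=2$, since both relators $[b,b^a]$ and $[b,b^{a^3}]$ lie in $F'$; thus Theorem~\ref{thm:ModLp} applies with empty $\mc B$, and Proposition~\ref{prop:Mult} gives $R/[R,F]\cong M(\BSV)$ with no free complement, so the summand $\Z^2$ of the conjecture is the torsion-free part of the Schur multiplier itself. By Theorem~\ref{thm:GensMG} and Lemma~\ref{lem:ModStruc}, $M(\BSV)$ is generated as a $\{\varepsilon\}^*$-module by the images of $r_1=[b,b^a]$ and $r_2=[b,b^{a^3}]$ and has the form $A\oplus\bigoplus_{\{\varepsilon\}^*}B$; the families $\mc A,\mc B_\ell,\mc C_\ell$ of the conjecture should be indexed by the $\varepsilon$-depth $\ell$, with $A$ (the rank-$2$ free part together with the $\mc A$-seed) the non-repeated piece.

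First I would determine, for each depth $k$ and each $i$, the lower central weight of $r_i^{\varepsilon^k}$ and---more importantly---of the residual elements obtained after subtracting off the $\Z$-combinations of lower-depth relators that match its leading term. Every $r_i^{\varepsilon^k}$ itself has $F$-weight $3$ (modulo $\gamma_4F$ each relator reduces to a multiple of $[b,[b,a]]$), so the relevant information lives in these residuals. Here the arithmetic is driven by $\varepsilon$ acting on the abelianisation $\Z^2$ by a matrix whose dominant eigenvalue is $2$; consequently $\varepsilon$ acts on the weight-$n$ graded piece $\gamma_nF/\gamma_{n+1}F$ with dominant eigenvalue $2^n$, and each application of $\varepsilon$ pushes the genuinely new part of the relation module roughly twice as deep into the series. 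This is the mechanism I expect to produce the doubling thresholds $5\cdot2^{\ell-1}$ and $9\cdot2^{\ell-1}$ of $\mc B_\ell$ and $\mc C_\ell$ and the period-doubling in $c$.

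Next I would set up the recursion that turns this period-doubling into the stated closed form. Because every $\varphi\in\Phi$ induces the endomorphism $\ti\varepsilon$ of $F/[R\gamma_cF,F]$ compatibly with $\varphi_c$, the image of $M(\BSV)$ in $M(\BSV/\gamma_c\BSV)$ is the subgroup in~(\ref{eqn:ImgMG}) generated by $\ti\varepsilon$-translates of $r_1,r_2$. Identifying the generators that first survive between $\gamma_{c}\BSV$ and $\gamma_{2c}\BSV$ with the $\ti\varepsilon$-images of those first surviving below $\gamma_c\BSV$ should give a functional relation carrying the data of one period to the next; the arithmetic-progression offsets $2^{\ell+2}m$, and the growth of the cyclic orders as $c$ runs through a period, would then follow from tracking how finely $\gamma_{c+1}F$ cuts each graded summand as $c$ increases past its threshold.

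The hard part, and the reason the statement remains conjectural, is pinning down the exact exponents $2^{4m+1},2^{4m+2},2^{4m+3},2^{4m+4}$ rather than merely the thresholds. These orders are governed by the full relation module $R/[R,F]$---equivalently by the way $\gamma_{c+1}F$ intersects each summand in the covering group $F/[R\gamma_cF,F]$---so a weight count alone does not suffice; I would attempt it through a Fox-calculus/Magnus description of $(R\cap F')/[R,F]$ inside the free associative algebra, reducing each order to a finite integral linear-algebra problem that is equivariant under the shift $\ell\mapsto\ell+1$. The genuine obstacle is proving that this finite computation yields the same exponents \emph{uniformly} over all periods, that is, for all $\ell$ and $m$, whereas Table~\ref{tab:Perf} only verifies it up to $c=53$; unlike the $2$-elementary abelian case of $\Grig$ treated in~\cite{Gri99}, where every order is bounded by $2$, here the exponents are unbounded and this uniform control is exactly what is missing. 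Once $\mc B_\ell,\mc C_\ell$ are shown non-trivial for infinitely many $\ell$, infinite generation of $M(\BSV)$ and hence non-finite-presentability follow from the Hopf formula.
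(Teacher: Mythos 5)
The statement you are addressing is labelled a \emph{conjecture} in the paper, and the paper offers no proof of it: the only support given is the output of the {\scshape DwyerQuotient} algorithm of Sections~\ref{sec:ModLp}--\ref{sec:FGQuot}, namely the explicit computation of $M_c(\BSV)$ for $1\leq c\leq 53$, from which the closed formulas for ${\mc A}(c)$, ${\mc B}_\ell(c)$ and ${\mc C}_\ell(c)$ are extrapolated. So there is no paper proof to match your argument against; the honest comparison is between your proposed strategy and the paper's purely computational evidence.

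Your setup in the first paragraph is correct and consistent with the paper: since both relators of $\BSV$ lie in $F'$, one has $\hirsch(\BSV/\BSV')=2=\rk(F)$, so ${\mc B}=\emptyset$ in Theorem~\ref{thm:ModLp}, Proposition~\ref{prop:Mult} gives $R/[R,F]\cong M(\BSV)$, and Theorem~\ref{thm:GensMG} together with Lemma~\ref{lem:ModStruc} exhibits $M(\BSV)$ as a $\{\varepsilon\}^*$-module generated by $[b,b^a]$ and $[b,b^{a^3}]$; the reduction of both relators to powers of $[b,[b,a]]$ modulo $\gamma_4F$ is also right. Beyond that point, however, the proposal is a research programme rather than a proof, and you say so yourself. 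Two concrete gaps. First, the passage from ``$\varepsilon$ acts on $\gamma_nF/\gamma_{n+1}F$ with dominant eigenvalue $2^n$'' to the specific thresholds $5\cdot 2^{\ell-1}$ and $9\cdot 2^{\ell-1}$, the offsets $2^{\ell+2}m$, and the subdivision of each period into pieces of relative lengths $1,4,3$ (for ${\mc B}_\ell$) and $3,2,2,1$ (for ${\mc C}_\ell$) is never carried out; a dominant-eigenvalue estimate bounds how fast the residuals sink into the lower central series but does not locate them, and these constants would have to come from an explicit computation that the proposal does not perform. Second, the exponents $2^{4m+1},\ldots,2^{4m+4}$ are deferred to an unspecified finite linear-algebra problem whose equivariance under the shift $\ell\mapsto\ell+1$ is asserted rather than established---and this uniformity over all periods is exactly the content of the conjecture. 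In short, nothing you write is demonstrably wrong and the framework is the right one, but no part of the stated formula is actually derived, so this cannot be accepted as a proof; it sits on the same epistemic footing as the paper itself, which likewise only verifies the pattern for $c\leq 53$.
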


\subsection{On the Dwyer quotients of some Fabrykowski-Gupta groups}
\label{sec:FG}
The Fabrykowski-Gupta group $\FG_3$ was introduced in~\cite{FG85} as an
example of a group with an intermediate word-growth. For every positive
integer $d$, Grigorchuk~\cite{Gri00} described a generalization $\FG_d$
of the Fabrykowski-Gupta group $\FG_3$. A rather longish invariant finite
$L$-presentation was computed in~\cite{BEH08}. Further, it was shown
that, if $d$ is not a prime-power, the group $\FG_d$ has a maximal
nilpotent quotient. This latter quotient is isomorphic to the maximal
nilpotent quotient of the wreath product $\Z_d\wr\Z_d$. We therefore
consider only those groups $\FG_d$ which admit a `rich' lower central
series; that is, the index $d$ is a prime-power.\medskip

Let $d\in\{3,5,7,11\}$ be a prime. Then the Dwyer quotients $M_c(\FG_d)$ are
$d$-elementary abelian with the following $d$-ranks.
\begin{center}
  \begin{tabular}{cl@{\,}l@{\,}l@{\,}l@{\,}l@{\,}l@{\,}l@{\,}l@{\,}l@{\,}l@{\,}}
    \toprule
    $d$ &\multicolumn{10}{c}{$\rk(M_c(\FG_d))$} \\
    \midrule
    $3$&$0^{[2]}$,&$1^{[3]}$,& $2^{[0]}$,& $3^{[9]}$,& $4^{[1]}$, &$5^{[26]}$,& $6^{[4]}$, &$7^{[77]}$,& $8^{[13]}$, &$9^{[12]}$ \\ 
    $5$&$0^{[1]}$, &$1^{[4]}$, &$2^{[2]}$, &$3^{[20]}$, &$4^{[10]}$, &$5^{[100]}$&$6^{[2]}$&&& \\ 
    $7$&$0^{[1]}$,& $1^{[2]}$, &$2^{[6]}$,&$ 3^{[2]}$,&$ 4^{[14]}$, &$5^{[42]}$, &$6^{[14]}$,&$7^{[34]}$ &&\\ 
    $11$&$0^{[1]}$, &$1^{[2]}$,&$2^{[2]}$,& $3^{[2]}$,& $4^{[10]}$, &$5^{[2]}$, &$6^{[22]}$, &$7^{[22]}$, &$8^{[22]}$, &$9^{[27]}$ \\ 
    \bottomrule
  \end{tabular}
\end{center}
As noted by Laurent Bartholdi and Olivier Siegenthaler, there is a
pattern in the ranks of the Dwyer quotients $M_c(\FG_d)$. For example,
it may holds that
\[
  M_c(\FG_5) \cong 
  \left\{\begin{array}{cl} 
   0,&\textrm{ if }c=0\\
   \Z_5^{2m+1},&\textrm{ if }c\in\{2+\frac{3}{2}(5^m-1),\ldots,
                                   1+\frac{3}{2}(5^m-1)+4\cdot 5^m\}\\[0.75ex]
   \Z_5^{2m+2},&\textrm{ if }c\in\{2+\frac{3}{2}(5^m-1)+4\cdot 5^m,
                            \ldots,1+\frac{3}{2}(5^{m+1}-1)\}
  \end{array}\right\}
\]
for $m\in\N_0$. This suggests the following conjecture.
\begin{conjecture}
  Let $d$ be a prime. Then the Schur multiplier of $\FG_d$, modulo the
  Dwyer-kernel, is infinitely generated $d$-elementary abelian.
\end{conjecture}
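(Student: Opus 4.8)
The plan is to separate the two assertions of the conjecture. Write $\FG_d = F/R$ and let $K=\bigcap_{c\in\N}\ker\varphi_c$ be the Dwyer-kernel, so the object of interest is the quotient $M(\FG_d)/K$. I would establish: (i) $M(\FG_d)$ is $d$-elementary abelian; and (ii) the ranks $\rk M_c(\FG_d)$ are unbounded as $c\to\infty$. From~(i) the quotient $M(\FG_d)/K$ is at once $d$-elementary abelian, being a quotient of a $d$-elementary abelian group; and since each $M_c(\FG_d)$ is itself a quotient of $M(\FG_d)$, this also accounts for the computational observation that the tabulated $M_c(\FG_d)$ are $d$-elementary. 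For the infinite generation, note that $K\subseteq\ker\varphi_c$ yields a surjection $M(\FG_d)/K\twoheadrightarrow M_c(\FG_d)$ for every $c$; hence the $d$-rank of $M(\FG_d)/K$ is at least $\rk M_c(\FG_d)$ for all $c$, and~(ii) forces it to be infinite, that is, $M(\FG_d)/K$ is infinitely generated.

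For claim~(i), I would first apply Theorem~\ref{thm:ModLp} to the (lengthy) $L$-presentation of $\FG_d$ from~\cite{BEH08}, obtaining an equivalent invariant finite $L$-presentation $\la\X\mid\Q'\cup{\mc B}\mid\Phi\mid\R'\ra$ with $\Q',\R'\subset F'$ and $\Phi=\{\sigma\}$. By Theorem~\ref{thm:GensMG} (cf.\ Lemma~\ref{lem:ModStruc}), $M(\FG_d)$ is generated as an abelian group by the images $\bar q$ and $\overline{r^{\sigma^k}}$ of the relators $q\in\Q'$, $r\in\R'$. Because $[R,F]$ is $\sigma$-invariant, it suffices to show that each of the finitely many relators satisfies $q^d,r^d\in[R,F]$; then $(\overline{r^{\sigma^k}})^{d}=\overline{(r^d)^{\sigma^k}}=1$ for every $k$, and $M(\FG_d)$ is $d$-elementary abelian. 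This is a finite verification on the explicit presentation, the direct analogue of the rewriting of the Grigorchuk relators in Example~\ref{ex:ModGri}: up to commutators and the relators recording the $d$-torsion of the generators of $\FG_d$, the defining relators are $d$-th powers, which is precisely what makes their images $d$-torsion.

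For claim~(ii) the idea is to exploit the substitution $\sigma$, which roughly multiplies lower-central-series weight by $d$, to relate the Dwyer quotient at level $c$ to that at a level of order $dc$. Concretely, one would show that the $\sigma^k$-images of the infinite relators of $\FG_d$ stay $\Z_d$-linearly independent in $M(\FG_d)/K$, so that infinitely many of them survive; this mirrors the mechanism in Grigorchuk's computation of $M(\Grig)$~\cite{Gri99}, and the periodic rank patterns tabulated above are the visible shadow of such a recursion. The natural source of the ever-new generators is the \emph{self-similar} (branch) structure of $\FG_d$, which reproduces a scaled copy of the group, and hence of its relators, at each level of the $d$-regular tree.

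The main obstacle is claim~(ii): converting the observed periodicity into a proof. One must show that the endomorphism of $M(\FG_d)$ induced by $\sigma$ --- read through the covering group $F/[R,F]$ and the Blackburn--Evens sequence --- is genuinely self-similar and keeps producing $\Z_d$-independent generators as $c$ grows, rather than eventually feeding them into the Dwyer-kernel $K$. This demands uniform control of the interaction between $\sigma$, the lower central series $\gamma_cF$, and the subgroup $[R,F]$, whereas the present algorithm supplies this information only for finitely many $c$. Making it uniform in $c$, and in the prime $d$, is the crux; given the size of the $\FG_d$-presentation a bare computational induction looks hopeless, so the argument should instead be anchored in the self-similarity of $\FG_d$, which is the true reason the relators, and with them the Schur multiplier generators, recur at every level.
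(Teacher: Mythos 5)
The statement you are asked to prove is labelled a \emph{conjecture} in the paper, and the paper offers no proof of it: its only support is the computational data for $M_c(\FG_d)$, $d\in\{3,5,7,11\}$, up to finitely many values of $c$, together with the observed periodic rank patterns (e.g.\ the tentative closed form for $M_c(\FG_5)$). Your reduction in the first paragraph is sound: since $K=\bigcap_c\ker\varphi_c\leq\ker\varphi_c$, each $M_c(\FG_d)$ is a quotient of $M(\FG_d)/K$, so it would indeed suffice to prove (i) that $M(\FG_d)$ (or even just every $M_c(\FG_d)$) is $d$-elementary abelian and (ii) that the ranks $\rk M_c(\FG_d)$ are unbounded. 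But neither (i) nor (ii) is actually established in your proposal, so what you have is a strategy, not a proof.

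Concretely: for (i), the claim that it is ``a finite verification'' that $q^d,r^d\in[R,F]$ for the relators of the modified $L$-presentation is not substantiated and is not obviously true. Deciding membership in $[R,F]$ is exactly where the difficulty of the Hopf formula lives; in Grigorchuk's computation for $\Grig$ the $2$-torsion of the multiplier generators required a genuine argument, not an inspection. Moreover the paper's data for the prime-power cases $d\in\{4,8,9\}$ show Dwyer quotients of mixed exponent (e.g.\ $(2,4,8)$, $(3,9)$), so any argument that ``the relators are $d$-th powers up to commutators'' must use primality of $d$ in an essential way that you have not identified; the relators of the $\FG_d$-presentations from \cite{BEH08} are commutator-type words, not visibly $d$-th powers. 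For (ii) you concede yourself that the decisive step --- showing that the $\sigma^k$-images of the relators remain $\Z_d$-independent in $M(\FG_d)/K$ for all $k$, i.e.\ that they do not eventually fall into the Dwyer-kernel --- is open, and that the algorithm only certifies finitely many $c$. Since both halves of the argument terminate in an unproved assertion, the proposal does not prove the statement; it is, however, a reasonable research plan that is consistent with (and goes no further than) the evidence the paper itself presents.
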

Finally, we summarize our results for $M_c(\FG_d)$ for
$d\in\{4,8,9\}$. The abelian invariants of the Dwyer quotients $M_c(\FG_d)$
are as follows.
\begin{center}
  \begin{tabular}{lr}
    \hline
    \multicolumn{1}{c}{$d$} & \multicolumn{1}{c}{$M_c(\FG_d)$} \\
    \hline
    & \raisebox{0ex}[2.5ex]{}
    $(1)^{[1]}$ 
    $(2)^{[1]}$ 
    $(2,2)^{[1]}$ 
    $(2,4)^{[4]}$ 
    $(2,2,2,4)^{[1]}$ \\ 4 &
    $(2,2,2,2,4)^{[4]}$  
    $(2,2,2,4,4)^{[16]}$ 
    $(2,2,2,2,4,4)^{[1]}$ 
    $(2,2,2,2,2,4,4)^{[3]}$  \\ &
    $(2,2,2,2,2,2,4,4)^{[16]}$ 
    $(2,2,2,2,2,4,4,4)^{[64]}$ 
    $(2,2,2,2,2,2,4,4,4)^{[5]}$ \\ &
    $(2,2,2,2,2,2,2,4,4,4)^{[11]}$ 
    $(2,2,2,2,2,2,2,2,4,4,4)^{[26]}$ \\
    \hline 
    & \raisebox{0ex}[2.5ex]{}
    $(1) ^ {[1]}$
    $(8) ^ {[2]}$
    $(4,8) ^{[3]}$
    $(2,4,8) ^{[4]}$
    $(2,8,8) ^{[1]}$
    $(2,2,8,8) ^{[2]}$\\ &
    $(2,2,2,8,8) ^{[2]}$
    $(2,2,4,8,8) ^{[2]}$
    $(2,4,4,8,8) ^{[2]}$
    $(2,4,8,8,8) ^{[2]}$\\ \raisebox{1.5ex}[-1.5ex]{8} &
    $(2,8,8,8,8) ^{[8]}$
    $(2,2,8,8,8,8) ^{[4]}$
    $(2,4,8,8,8,8) ^{[20]}$
    $(2,2,4,8,8,8,8) ^{[32]}$\\ &
    $(2,2,8,8,8,8,8) ^{[7]}$
    $(2,2,2,8,8,8,8,8) ^{[16]}$
    $(2,2,2,2,8,8,8,8,8) ^{[16]}$\\ & 
    $(2,2,2,4,8,8,8,8,8) ^{[16]}$
    $(2,2,4,4,8,8,8,8,8) ^{[3]}$\\
    \hline
    & \raisebox{0ex}[2.5ex]{}
    $(1) ^ {[1]}$
    $(9) ^ {[2]}$
    $(3,9) ^ {[2]}$
    $(3,3,9) ^ {[4]}$
    $(3,9,9) ^ {[2]}$ \\ &
    $(9,9,9) ^ {[2]}$
    $(3,9,9,9) ^ {[2]}$
    $(3,3,9,9,9) ^ {[4]}$
    $(3,9,9,9,9) ^ {[2]}$ \\ \raisebox{1.5ex}[-1.5ex]{9} &
    $(9,9,9,9,9) ^ {[12]}$
    $(3,9,9,9,9,9) ^ {[18]}$
    $(3,3,9,9,9,9,9) ^ {[36]}$ \\ &
    $(3,9,9,9,9,9,9) ^ {[18]}$
    $(9,9,9,9,9,9,9) ^ {[17]}$
    $(3,9,9,9,9,9,9,9) ^ {[12]}$\\
    \hline
  \end{tabular}
\end{center}
Again, these computational results suggest that the groups $\FG_d$
are not finitely presentable. Further, the exponent of $M_c(\FG_d)$
is most likely the index $d$ itself.

\def\cprime{$'$}

\bigskip

\noindent Ren\'e Hartung,
{\scshape Mathematisches Institut},
{\scshape Georg-August Universit\"at zu G\"ottingen},
{\scshape Bunsenstra\ss e 3--5},
{\scshape 37073 G\"ottingen}
{\scshape Germany}\\[1ex]
{\it Email:} \qquad \verb|rhartung@uni-math.gwdg.de|\\[2.ex]
June 2009 (revised March 2010, accepted April 2010).

\end{document}